\newtheorem{theorem}{Theorem}
\newtheorem{lemma}{Lemma}
\newtheorem{algorithm}{Algorithm}
\newtheorem{remark}{Remark}
\newtheorem{proposition}{Proposition}
\newtheorem{definition}{Definition}
\newtheorem{corollary}{Corollary}
\newcommand{\rd}{\,\mathrm{d}}
\newcommand{\rtr}{\,\mathrm{tr}}
\newcommand{\bsx}{\boldsymbol{x}}
\newcommand{\bsy}{\boldsymbol{y}}
\newcommand{\bsl}{\boldsymbol{l}}
\newcommand{\bsk}{\boldsymbol{k}}
\newcommand{\bst}{\boldsymbol{t}}
\newcommand{\bsq}{\boldsymbol{q}}
\newcommand{\bsgamma}{\boldsymbol{\gamma}}
\newcommand{\bszero}{\boldsymbol{0}}
\newcommand{\bsone}{\boldsymbol{1}}
\newcommand{\nat}{\mathbb{N}}
\newcommand{\EE}{\mathbb{E}}
\newcommand{\BB}{\mathcal{B}}
\newcommand{\FF}{\mathbb{F}}
\newcommand{\wal}{\mathrm{wal}}
\newcommand{\cL}{\mathcal{L}}
\newenvironment{proof}{\begin{trivlist}
    \item[\hskip\labelsep{\it Proof.}]}{$\hfill\Box$\end{trivlist}}
\begin{document}

\title{Construction of scrambled polynomial lattice rules over $\mathbb{F}_2$ with small mean square weighted $\mathcal{L}_2$ discrepancy\thanks{The support of Grant-in-Aid for JSPS Fellows No.24-4020 is gratefully acknowledged.}}

\author{Takashi Goda\thanks{Graduate School of Information Science and Technology, The University of Tokyo, 7-3-1 Hongo, Bunkyo-ku, Tokyo 113-8656 ({\tt goda@iba.t.u-tokyo.ac.jp}).}}

\date{\today}

\maketitle

\begin{abstract}
The $\cL_2$ discrepancy is one of several well-known quantitative measures for the equidistribution properties of point sets in the high-dimensional unit cube. The concept of weights was introduced by Sloan and Wo\'{z}niakowski to take into account the relative importance of the discrepancy of lower dimensional projections. As known under the name of quasi-Monte Carlo methods, point sets with small weighted $\cL_2$ discrepancy are useful in numerical integration. This study investigates the component-by-component construction of polynomial lattice rules over the finite field $\FF_2$ whose scrambled point sets have small mean square weighted $\cL_2$ discrepancy. An upper bound on this discrepancy is proved, which converges at almost the best possible rate of $N^{-2+\delta}$ for all $\delta>0$, where $N$ denotes the number of points. Numerical experiments confirm that the performance of our constructed polynomial lattice point sets is comparable or even superior to that of Sobol' sequences.
\end{abstract}
{\em Keywords}:\; Polynomial lattice rules, weighted $\mathcal{L}_2$ discrepancy, numerical integration, randomized quasi-Monte Carlo\\
{\em MSC classifications}:\; 65C05, 65D30, 65D32

%%%%%%%%%%%%%%%%%%%%%%%%%%%%%%%%%%%%%%%%%%%%%%%%%%%%%%%%%%%
%%%%%%%%%%%%%%%%%%%%%%%%%%%%%%%%%%%%%%%%%%%%%%%%%%%%%%%%%%%
%%%%%%%%%%%%%%%%%%%%%%%%%%%%%%%%%%%%%%%%%%%%%%%%%%%%%%%%%%%
\section{Introduction}\label{intro}

In this paper, we study the approximation of an $s$-dimensional integral over the unit cube $[0,1)^s$
  \begin{align*}
    I(f)=\int_{[0,1)^s}f(\bsx)\rd \bsx ,
  \end{align*}
by averaging function evaluations at $N$ points with equal weights
  \begin{align*}
    Q(f)=\frac{1}{N}\sum_{n=0}^{N-1}f(\bsx_n) .
  \end{align*}
Monte Carlo (MC) and quasi-Monte Carlo (QMC) methods choose the point set $P_{N,s}=\{\bsx_0,\ldots, \bsx_{N-1}\}$ randomly and deterministically, respectively. The aim of QMC methods is to distribute the quadrature points as uniformly as possible so as to yield a small integration error. This idea is supported by the general form of various integration error bounds
  \begin{align}\label{eq:error_bound}
    |I(f)-Q(f)|\le V(f)D(P_{N,s}) ,
  \end{align}
where $V(f)$ is the variation of the integrand $f$ in a certain sense, which depends only on $f$, while $D(P_{N,s})$ is the corresponding discrepancy of the point set $P_{N,s}$, which measures the equidistribution properties of $P_{N,s}$ and depends only on $P_{N,s}$. Thus the smaller $D(P_{N,s})$ is, the smaller an integration error we can expect. The most well-known bound of this form is the so-called Koksma-Hlawka inequality in which $V(f)$ is the variation of $f$ in the sense of Hardy and Krause and $D(P_{N,s})$ is the star discrepancy of $P_{N,s}$, see for example \cite{Lem09,Nie92a}.

Randomization of the QMC point set is helpful to obtain statistical information on the integration error and sometimes even enables us to improve the rate of convergence for numerical integration. There have been several methods introduced for randomization \cite{CP76,Hic96,Mat98,Owe95,TF03}. Using the linearity of expectation and (\ref{eq:error_bound}), the mean square integration error is upper-bounded by
  \begin{align*}
    \EE\left[|I(f)-Q(f)|^2\right]\le V^2(f)\EE\left[D^2(\tilde{P}_{N,s})\right] ,
  \end{align*}
where the expectation is taken with respect to all the possible randomized point sets $\tilde{P}_{N,s}$ of $P_{N,s}$. Hence, the mean square discrepancy becomes a meaningful measure of the equidistribution properties of $P_{N,s}$ in this setting.

Among the discrepancy measures, the $\cL_2$ discrepancy is one of the popular measures of the equidistribution properties of point sets. The relationship between the $\cL_2$ discrepancy and numerical integration has been often discussed in the literature, see for example \cite{Hic98,NW10,SW98,Woz91,Zar68}. Sloan and Wo\'{z}niakowski \cite{SW98} introduced the concept of the weighted $\cL_2$ discrepancy to take the relative importance of the discrepancy of lower dimensional projections into account. It provides part of the reason why QMC methods are successful even for very large values of $s$, as often reported in the practical applications to financial problems \cite{CMO97,NT96,PT95}. This phenomenon is hard to explain by the classical integration error bounds. Hence, construction of point sets with small weighted $\cL_2$ discrepancy is of particular interest to practitioners. Especially, in this paper, we focus on constructing randomized QMC point sets with small mean square weighted $\cL_2$ discrepancy.

In order to give the definition of the weighted $\cL_2$ discrepancy, we introduce some notations first. For a point set $P_{N,s}=\{\bsx_0,\ldots, \bsx_{N-1}\}$ in the unit cube $[0,1)^s$, the local discrepancy function is defined as
  \begin{align*}
    \Delta(\bst):=\frac{A_N([\bszero,\bst),P_{N,s})}{N}-t_1\cdots t_s ,
  \end{align*}
where $\bst=(t_1,\ldots, t_s)$ is a vector from $[0,1)^s$, $[\bszero,\bst)$ is the axis-parallel box of the form $[0,t_1)\times \cdots \times [0,t_s)$, and $A_N([\bszero,\bst),P_{N,s})$ denotes the number of indices $n$ with $\bsx_n\in [\bszero,\bst)$. Let $I_s=\{1,\ldots,s\}$ and let $\gamma_u$ be a non-negative real number for $u\subseteq I_s$. We denote by $|u|$ the cardinality of $u$ and by $\bst_u$ a vector from $[0,1)^{|u|}$ containing all the components of $\bst\in [0,1)^s$ whose indices are in $u$. Further, let $\rd \bst_u=\prod_{j\in u}\rd t_j$ and let $(\bst_u,\bsone)$ denote a vector from $[0,1)^s$ with all the components whose indices are not in $u$ replaced by one. Then the weighted $\cL_2$ discrepancy of the point set $P_{N,s}$ is given by
  \begin{align*}
    \cL_{2,N,\bsgamma}(P_{N,s})= \left(\sum_{\emptyset \ne u\subseteq I_s}\gamma_u \int_{[0,1]^{u}}|\Delta(\bst_u,\bsone)|^2 \rd \bst_u\right)^{1/2} .
  \end{align*}
We can recover the classical $\cL_2$ discrepancy by choosing $\gamma_{I_s}=1$ and $\gamma_u=0$ for $u\subset I_s$. The most famous choices of $\gamma_u$ are so-called product weights, that is, $\gamma_u=\prod_{j\in u}\gamma_j$ for all $u\subseteq I_s$. The following proposition generalizes the well-known formula for the classical $\cL_2$ discrepancy introduced by Warnock, see for example \cite{DP10,Mat99}.

\begin{proposition}\label{prop:L2_disc}
For any point set $P_{N,s}=\{\bsx_0,\ldots, \bsx_{N-1}\}$ in $[0,1)^s$ and any sequence $\bsgamma=(\gamma_u)_{u\subseteq I_s}$ of weights, we have
  \begin{align*}
  &  \cL_{2,N,\bsgamma}^2(P_{N,s}) \\
  = & \sum_{\emptyset \ne u\subseteq I_s}\gamma_u\left[ \frac{1}{3^{|u|}}-\frac{2}{N}\sum_{n=0}^{N-1}\prod_{j\in u}\frac{1-x^2_{n,j}}{2}+\frac{1}{N^2}\sum_{n,n'=0}^{N-1}\prod_{j\in u}(1-\max(x_{n,j},x_{n',j}))\right] ,
  \end{align*}
where $x_{n,j}$ is the $j$-th component of the point $\bsx_n$.
\end{proposition}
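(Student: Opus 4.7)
The plan is a direct calculation: expand the square of the local discrepancy function inside the integral, exchange the order of summation and integration, and exploit the tensor-product structure so that the $|u|$-dimensional integral factorizes into one-dimensional integrals.

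First, I would observe that for fixed $u \subseteq I_s$ the counting quantity can be written as a sum of products of indicator functions,
\begin{align*}
A_N([\bszero,(\bst_u,\bsone)),P_{N,s}) = \sum_{n=0}^{N-1}\prod_{j\in u}\mathbf{1}[x_{n,j}<t_j],
\end{align*}
so that
\begin{align*}
\Delta(\bst_u,\bsone) = \frac{1}{N}\sum_{n=0}^{N-1}\prod_{j\in u}\mathbf{1}[x_{n,j}<t_j] - \prod_{j\in u}t_j .
\end{align*}
Squaring gives three terms: the pure product $\prod_{j\in u}t_j^2$, a cross term, and a double sum from the square of the counting part.

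Next I would integrate each of the three terms over $\bst_u \in [0,1]^{|u|}$. The pure product contributes $\prod_{j\in u}\int_0^1 t_j^2\,dt_j = 1/3^{|u|}$. For the cross term, Fubini yields
\begin{align*}
\int_{[0,1]^u}\prod_{j\in u}t_j\,\mathbf{1}[x_{n,j}<t_j]\,d\bst_u = \prod_{j\in u}\int_{x_{n,j}}^1 t_j\,dt_j = \prod_{j\in u}\frac{1-x_{n,j}^2}{2},
\end{align*}
which, with the factor $-2/N$ and summation over $n$, produces the second term in the stated formula. For the double sum I would use $\mathbf{1}[x_{n,j}<t_j]\mathbf{1}[x_{n',j}<t_j]=\mathbf{1}[\max(x_{n,j},x_{n',j})<t_j]$, giving
\begin{align*}
\int_{[0,1]^u}\prod_{j\in u}\mathbf{1}[\max(x_{n,j},x_{n',j})<t_j]\,d\bst_u = \prod_{j\in u}\bigl(1-\max(x_{n,j},x_{n',j})\bigr),
\end{align*}
which supplies the third term.

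Finally I would multiply by $\gamma_u$ and sum over nonempty $u \subseteq I_s$ to recover $\cL_{2,N,\bsgamma}^2(P_{N,s})$. There is no real obstacle here; the only thing to watch is that the tensor-product factorization across the coordinates indexed by $u$ is applied cleanly (this is what makes all three integrals separable), so the bookkeeping is routine rather than analytic.
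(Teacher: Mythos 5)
Your proof is correct and is exactly the standard Warnock-type computation (expand $\Delta^2$, use Fubini and the tensor-product structure to reduce to the three one-dimensional integrals $\int_0^1 t^2\,dt$, $\int_x^1 t\,dt$, and $\int_{\max}^1 dt$); the paper itself omits the proof of this proposition entirely, citing \cite{DP10,Mat99} for the unweighted case, so there is no alternative argument to compare against. No gaps.
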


There are two prominent construction principles of QMC point sets: lattice rules \cite{DKS13,Nie92a,SJ94} and digital $(t,m,s)$-nets \cite{DP10,Nie92a}. In this study, we are concerned with polynomial lattice rules which can be categorized into the latter, while its name comes from the analogy with lattice rules. Since first introduced by Niederreiter \cite{Nie92b}, polynomial lattice rules have been extensively investigated, see for example \cite{DP10,Lec04,Pil12}. In the following, we give the definition of polynomial lattice rules for the case of base 2 because we will only deal with that case.

Let $\FF_2:=\{0,1\}$ be the two element field and denote by $\FF_2((x^{-1}))$ the field of formal Laurent series over $\FF_2$. Every element of $\FF_2((x^{-1}))$ has the form
  \begin{align*}
    L = \sum_{l=w}^{\infty}t_l x^{-l} ,
  \end{align*}
where $w$ is an arbitrary integer and all $t_l\in \FF_2$. Further, we denote by $\FF_2[x]$ the set of all polynomials over $\FF_2$. For a given integer $m$, we define the map $v_m$ from $\FF_2((x^{-1}))$ to the interval $[0,1)$ by
  \begin{align*}
    v_m\left( \sum_{l=w}^{\infty}t_l x^{-l}\right) =\sum_{l=\max(1,w)}^{m}t_l 2^{-l}.
  \end{align*}
We often identify a non-negative integer $k$ whose dyadic expansion is given by $k=\kappa_0+\kappa_1 2+\cdots +\kappa_a 2^a$ with the polynomial $k(x)=\kappa_0+\kappa_1 x+\cdots +\kappa_a x^a \in \FF_2[x]$.  For $\bsk=(k_1,\ldots, k_s)\in (\FF_2[x])^s$ and $\bsq=(q_1,\ldots, q_s)\in (\FF_2[x])^s$, we define the inner product as
  \begin{align*}
     \bsk \cdot \bsq =\sum_{j=1}^{s}k_j q_j \in \FF_2[x] ,
  \end{align*}
and we write $q\equiv 0 \pmod p$ if $p$ divides $q$ in $\FF_2[x]$. Using these notations, the polynomial lattice point set is constructed as follows.

\begin{definition}\label{def:polynomial_lattice}
Let $m, s \in \nat$. Let $p \in \FF_2[x]$ be an irreducible polynomial with $\deg(p)=m$ and let $\bsq=(q_1,\ldots,q_s) \in (\FF_2[x])^s$. The polynomial lattice point set $P_{2^m,s}(\bsq,p)$ is the point set consisting of $2^m$ points given by
  \begin{align*}
    \bsx_n &:= \left( v_m\left( \frac{n(x)q_1(x)}{p(x)} \right) , \ldots , v_m\left( \frac{n(x)q_s(x)}{p(x)} \right) \right) \in [0,1)^s ,
  \end{align*}
for $0\le n<2^m$.
\end{definition}
In the following, the notation $P_{2^m,s}(\bsq,p)$ implicitly means that $\deg(p)=m$ and the number of components for a vector $\bsq$ is $s$.

For randomization of the polynomial lattice point set, we apply Owen's scrambling \cite{Owe95,Owe97a,Owe97b}. It proceeds as follows. For $\bsx=(x_1,\ldots, x_s)\in [0,1)^s$, we denote the dyadic expansion by $x_j=x_{j,1}2^{-1}+x_{j,2}2^{-2}+\cdots$. Let $\bsy=(y_1,\ldots, y_s)\in [0,1)^s$ be the scrambled point of $\bsx$ whose dyadic expansion is represented by $ y_j=y_{j,1}2^{-1}+y_{j,2}2^{-2}+\cdots$. Here we assume that both dyadic expansions of $x_j$ and $y_j$ are unique in the sense that infinitely many digits are different from 1. Each coordinate $y_j$ is obtained by applying permutations to each digit of $x_j$. Here the permutation applied to $x_{j,k}$ depends on $x_{j,l}$ for $1\le l\le k-1$. In particular, $y_{j,1}=\pi_j(x_{j,1}),\, y_{j,2}=\pi_{j,x_{j,1}}(x_{j,2}), y_{j,3}=\pi_{j,x_{j,1},x_{j,2}}(x_{j,3})$, and in general
  \begin{align*}
     y_{j,k}=\pi_{j,x_{j,1},\ldots, x_{j,k-1}}(x_{j,k}) ,
  \end{align*}
where $\pi_{j,x_{j,1},\ldots, x_{j,k-1}}$ is a random permutation of $\{0,1\}$. We choose permutations with different indices mutually independent from each other where each permutation is chosen with the same probability. Then, as shown in \cite[Proposition~2]{Owe95}, the scrambled point $\bsy$ is uniformly distributed in $[0,1)^s$. We refer to \cite{Hic96,Mat98} for simplifications of the above Owen's scrambling algorithm, which can be implemented more easily.

Our aim here is to find a vector $\bsq$ with $p$ fixed, which yields a small mean square weighted $\cL_2$ discrepancy. Restricting each $q_j\in \FF_2[x]$ such that $q_j\ne 0$ and $\deg(q_j)<m$, the number of candidates for $\bsq$ is $(2^{m}-1)^s$, which is quite large. The component-by-component (CBC) construction can significantly reduce the computational burden by searching over all the candidates of $q_{j+1}$ while leaving the existing components ($q_1,\ldots, q_j$) unchanged. The CBC construction was first invented for lattice rules by Korobov \cite{Kor59} and re-discovered more recently by Sloan and Reztsov \cite{SR02}. It also has been applied to polynomial lattice rules. Without requiring exhaustive search, the CBC construction usually finds a good vector $\bsq$ as discussed in many previous studies, see for example \cite{BD11,DKPS05,DLP05,KP07,KP11}. Hence, we employ the CBC construction to find a vector $\bsq$ which gives a small mean square weighted $\cL_2$ discrepancy.

We end this section with a brief outline of this paper. In the next section, we introduce Walsh functions and their useful properties. They play a central role in the analysis of the mean square weighted $\cL_2$ discrepancy. In Section \ref{disc}, we study the mean square weighted $\cL_2$ discrepancy of scrambled polynomial lattice rules. Next, in Section \ref{cbc}, we construct polynomial lattice rules whose scrambled point sets have small mean square weighted $\cL_2$ discrepancy. We consider two cases for weights here: general weights and product weights. Our construction algorithm is extensible in $s$ for product weights, while it is not for general weights. Finding an adequate construction algorithm extensible in $s$ for general weights is open for further research. We prove an upper bound on the root mean square discrepancy which converges at a rate of $N^{-1+\delta}$ for all $\delta>0$, where $N=2^m$ denotes the number of points. As Roth \cite{Rot54} proved that the lower bound on the classical $\cL_2$ discrepancy of $N$ points is given by
  \begin{align}\label{eq:lower_bound}
    \cL_{2,N,\bsgamma}(P_{N,s})\ge c_s\frac{(\log N)^{(s-1)/2}}{N} ,
  \end{align}
where $c_s$ is a constant dependent only on $s$, our upper bound is almost best possible in the sense that a rate of $N^{-1}$ cannot be achieved. We further discuss strong tractability of our construction algorithm. Finally, in Section \ref{numer}, we show the performance of our constructed polynomial lattice point sets and compare with that of the well-known Sobol' sequences.

%%%%%%%%%%%%%%%%%%%%%%%%%%%%%%%%%%%%%%%%%%%%%%%%%%%%%%%%%%%
%%%%%%%%%%%%%%%%%%%%%%%%%%%%%%%%%%%%%%%%%%%%%%%%%%%%%%%%%%%
%%%%%%%%%%%%%%%%%%%%%%%%%%%%%%%%%%%%%%%%%%%%%%%%%%%%%%%%%%%
\section{Walsh functions}\label{walsh}

Walsh functions were first introduced by Walsh \cite{Wal23} and have been extensively studied for example in \cite{Chr55,Fin49}. We refer to \cite[Appendix~A]{DP10} for more information on Walsh functions. In the following, $\nat_0:=\nat \cup \{0\}$ denotes the set of non-negative integers. We first give the definition of dyadic Walsh functions for the one-dimensional case.
\begin{definition}
Let $k\in \nat_0$ with dyadic expansion $k = \kappa_0+\kappa_1 2+\cdots +\kappa_{a}2^{a}$. Then, the $k$-th dyadic Walsh function $\wal_k: [0,1)\to \{-1,1\}$ is defined as
  \begin{align*}
    \wal_k(x) = (-1)^{x_1\kappa_0+\cdots+x_{a+1}\kappa_a} ,
  \end{align*}
for $x\in [0,1)$ with dyadic expansion $x=x_1 2^{-1}+x_2 2^{-2}+\cdots $ (unique in the sense that infinitely many of the $x_i$ are different from 1).
\end{definition}
This definition can be generalized to the higher-dimensional case.

\begin{definition}
For $s\in \nat$, let $\bsx=(x_1,\ldots, x_s)\in [0,1)^s$ and $\bsk=(k_1,\ldots, k_s)\in \nat_0^s$. We define $\wal_{\bsk}: [0,1)^s \to \{-1,1\}$ by
  \begin{align*}
    \wal_{\bsk}(\bsx) = \prod_{j=1}^s \wal_{k_j}(x_j) .
  \end{align*}
\end{definition}

In the following, the operator $\oplus$ denotes the digitwise addition modulo $2$, that is, for $x, y\in [0,1)$ with dyadic representations $x=\sum_{i=1}^{\infty}x_i 2^{-i}$ and $y=\sum_{i=1}^{\infty}y_i 2^{-i}$, $\oplus$ is defined as
  \begin{align*}
    x\oplus y = \sum_{i=1}^{\infty}z_i 2^{-i} ,
  \end{align*}
where $z_i\equiv x_i+y_i \pmod{2}$. Again we assume that the dyadic expansion of $x\oplus y$ is unique in the sense that infinitely many digits are different from 1. We also define a digitwise addition for non-negative integers based on those dyadic representations. In case of vectors in $[0,1)^s$ or $\nat_0^s$, the operator $\oplus$ is carried out componentwise. Further, we call $x\in [0,1)$ a dyadic rational if it can be represented by a finite dyadic expansion. The proposition below summarizes some basic properties of Walsh functions.

\begin{proposition}\label{prop:walsh} We have the following:
\begin{enumerate}
\item For all $k,l\in \nat$ and all $x,y\in [0,1)$ with the restriction that if $x,y$ are not dyadic rationals, then $x\oplus y$ is not allowed to be a dyadic rational, we have
  \begin{align*}
    \wal_k(x)\wal_l(x)=\wal_{k\oplus l}(x) ,\ \wal_k(x)\wal_k(y)=\wal_k(x\oplus y) .
  \end{align*}
\item We have
  \begin{align*}
    \int_{0}^{1}\wal_0(x)\rd x=1 \quad \text{and} \quad \int_{0}^{1}\wal_k(x)\rd x=0 \quad \text{if} \ k\in \nat .
  \end{align*}
\item For all $\bsk, \bsl\in \nat_0^s$, we have
  \begin{align*}
    \int_{[0,1)^s}\wal_{\bsk}(\bsx)\wal_{\bsl}(\bsx)\rd \bsx = \left\{ \begin{array}{ll}
    1 & \text{if} \ \bsk=\bsl ,  \\
    0 & \text{otherwise} .        \\
    \end{array} \right.
  \end{align*}
\item For $s\in \nat$, the system $\{\wal_{\bsk}:\ \bsk=(k_1,\ldots, k_s)\in \nat_0^s\}$ is a complete orthonormal system in $\cL_2([0,1]^s)$.
\end{enumerate}
\end{proposition}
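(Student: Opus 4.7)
The plan is to dispatch items (1)--(3) by direct computation from the dyadic definition of $\wal_k$, and to reserve item (4) (completeness) for a separate density argument, which will be the main obstacle.

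For item (1), I would expand $k$ and $l$ in their dyadic form and write $\wal_k(x)\wal_l(x)=(-1)^{\sum_i x_{i+1}(\kappa_i+\lambda_i)}$, then fold the exponent modulo $2$ using $(-1)^2=1$ to recover $(-1)^{\sum_i x_{i+1}(\kappa_i\oplus\lambda_i)}=\wal_{k\oplus l}(x)$. The rationality caveat in the statement is just the usual non-uniqueness of dyadic expansions at dyadic rationals, handled by the convention that only finitely many $x_i$ equal $1$; the excluded configuration (where $x,y$ both have non-terminating expansions but $x\oplus y$ is dyadic rational) is exactly the one in which the two admissible expansions of $x\oplus y$ would give contradictory values. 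The companion identity $\wal_k(x)\wal_k(y)=\wal_k(x\oplus y)$ follows by the same bit-wise cancellation, now applied to the digits of $x$ and $y$ rather than to $\kappa$ and $\lambda$.

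Item (2) is immediate for $k=0$. For $k\ge 1$, I would pick the largest index $a$ with $\kappa_a=1$, split $[0,1)$ into $2^{a+1}$ dyadic intervals of length $2^{-(a+1)}$, and pair intervals that differ only in the $(a+1)$-st digit; since $\wal_k$ takes opposite signs on the two halves of such a pair, the integral vanishes. Item (3) then falls out of items (1) and (2) after factoring the integral over coordinates: $\int_{[0,1)^s}\wal_{\bsk}(\bsx)\wal_{\bsl}(\bsx)\,d\bsx=\prod_{j=1}^s \int_0^1 \wal_{k_j\oplus l_j}(x_j)\,dx_j$, which equals $1$ iff $k_j\oplus l_j=0$ for every $j$, i.e., $\bsk=\bsl$.

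The hard part is item (4). Orthonormality is already in hand, so only density of the linear span needs to be shown. My plan is, for each $m\ge 1$, to identify the span of $\{\wal_{\bsk}:\bsk\in\{0,\ldots,2^m-1\}^s\}$ with the space of functions that are constant on every dyadic box $\prod_{j=1}^s[a_j 2^{-m},(a_j+1)2^{-m})$. One inclusion is transparent: for $k<2^m$, the value $\wal_k(x)=(-1)^{x_1\kappa_0+\cdots+x_m\kappa_{m-1}}$ depends only on the first $m$ binary digits of $x$, so tensoring over coordinates keeps the product $\wal_{\bsk}$ constant on each dyadic box. The reverse inclusion is a dimension count: the orthonormal family established in item (3) is linearly independent, and both spaces have dimension $2^{ms}$, so they must coincide. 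Since the nested union over $m$ of these dyadic step-function spaces is dense in $\cL_2([0,1]^s)$, completeness follows. For the standard details of the density argument I would refer to \cite[Appendix~A]{DP10}, which the paper already cites.
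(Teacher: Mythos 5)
Your argument is correct in all four items: the digitwise computation for (1) (including the correct reading of the dyadic-rational caveat as guarding against the non-canonical expansion of $x\oplus y$), the pairing of dyadic intervals for (2), the factorization via (1) and (2) for (3), and the dimension-count identification of $\mathrm{span}\{\wal_k : 0\le k<2^m\}$ with the dyadic step functions followed by density for (4). Note, however, that the paper itself supplies no proof of this proposition --- it is stated as a summary of standard facts with a pointer to \cite[Appendix~A]{DP10} --- so there is no internal argument to compare against; your write-up is essentially the standard proof found in that reference.
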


Furthermore, in order to introduce an important relation between Walsh functions and polynomial lattice rules as described below in Lemma \ref{lamma:dual_walsh}, we add one more notation and introduce the concept of the so-called {\it dual polynomial lattice} of a polynomial lattice point set $P_{2^m,s}(\bsq,p)$. For $k\in \nat_0$ with dyadic expansion $k=k_0+k_1 2+\cdots $, $\rtr_m(k)$ gives a polynomial of degree at most $m$ by truncating the associated polynomial $k(x)\in \FF_2[x]$ as
  \begin{align*}
    \rtr_m(k)=k_0+k_1 x+\cdots +k_{m-1}x^{m-1}.
  \end{align*}
For a vector $\bsk=(k_1,\ldots, k_s)\in \nat_0^s$, we define $\rtr_m(\bsk)=(\rtr_m(k_1),\ldots, \rtr_m(k_s))$. With this notation, we introduce the following definition of the dual polynomial lattice $D^*_{\bsq,p}$.
\begin{definition}\label{def:dual_net}
The dual polynomial lattice for a polynomial lattice point set $P_{2^m,s}(\bsq,p)$ is given by
  \begin{align*}
     D^*_{\bsq,p}  = \{ \bsk\in \nat_0^{s}:\ \mathrm{tr}_m(\bsk)\cdot \bsq\equiv 0 \pmod p \} .
  \end{align*}
\end{definition}
Then, the following lemma relates the dual polynomial lattice of a polynomial lattice point set to the numerical integration of Walsh functions. It follows immediately from Definition \ref{def:dual_net}, \cite[Lemma~10.6]{DP10} and \cite[Lemma~4.75]{DP10}.
\begin{lemma}\label{lamma:dual_walsh}
Let $D^*_{\bsq,p}$ be the dual polynomial lattice of a polynomial lattice point set $P_{2^m,s}(\bsq,p)$. Then we have
  \begin{align*}
    \frac{1}{2^m}\sum_{n=0}^{2^m-1}\wal_{\bsk}(\bsx_n)=\left\{ \begin{array}{ll}
     1 & \text{if} \ \bsk\in D^*_{\bsq,p} , \\
     0 & \text{otherwise} . \\
     \end{array} \right.
  \end{align*}
\end{lemma}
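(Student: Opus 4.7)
The plan is to show that for each $n$ the value $\wal_{\bsk}(\bsx_n)$ is $(-1)^{L(n)}$ for a certain $\FF_2$-linear functional $L$ on $\FF_2[x]_{<m}$, and then analyze when $L$ is identically zero.

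First, because $v_m$ only retains the digits at positions $1,\ldots,m$, one sees that $\wal_{k_j}(x_{n,j})$ depends only on $\mathrm{tr}_m(k_j)$; likewise, by Definition \ref{def:dual_net}, the condition $\bsk\in D^*_{\bsq,p}$ depends only on $\mathrm{tr}_m(\bsk)$. So I can assume $\deg k_j<m$ throughout.

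Next, I would unfold the definitions. Writing $n(x)q_j(x)/p(x)=\sum_{l\ge w}t_l^{(n,j)}x^{-l}$, the $i$-th digit of $x_{n,j}$ is $t_i^{(n,j)}$ for $1\le i\le m$. If $k_j(x)=\kappa_{j,0}+\cdots+\kappa_{j,m-1}x^{m-1}$, then
\begin{align*}
\wal_{\bsk}(\bsx_n)=(-1)^{\sum_{j=1}^s\sum_{i=1}^m\kappa_{j,i-1}t_i^{(n,j)}}.
\end{align*}
The crucial observation is that $\sum_{i=1}^m\kappa_{j,i-1}t_i^{(n,j)}$ is exactly the coefficient of $x^{-1}$ in the Laurent series $k_j(x)\cdot n(x)q_j(x)/p(x)$. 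Summing over $j$, the exponent equals $[x^{-1}]\,n(x)Q(x)/p(x)$, where $Q(x):=\mathrm{tr}_m(\bsk)\cdot\bsq\in\FF_2[x]$. Call this linear functional $L(n)\in\FF_2$.

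Now split into cases. If $\bsk\in D^*_{\bsq,p}$, then $p\mid Q$, so $n(x)Q(x)/p(x)\in\FF_2[x]$ and $L(n)=0$ for every $n$, giving $\sum_n\wal_{\bsk}(\bsx_n)=2^m$. If $\bsk\notin D^*_{\bsq,p}$, write $Q=pr+Q_1$ with $0\ne Q_1$ of degree less than $m$; then $L(n)$ is the coefficient of $x^{-1}$ in $n(x)Q_1(x)/p(x)$. Reducing $n(x)Q_1(x)$ modulo $p(x)$, a short calculation shows $L(n)$ equals the coefficient of $x^{m-1}$ in $n(x)Q_1(x)\bmod p(x)$. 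Because $p$ is irreducible, $\FF_2[x]/p$ is a field, and multiplication by $Q_1\not\equiv 0$ is a bijection on it; hence as $n$ runs through polynomials of degree less than $m$, the product $n(x)Q_1(x)\bmod p(x)$ runs through all $2^m$ residues, each exactly once. Among these, exactly $2^{m-1}$ have the coefficient of $x^{m-1}$ equal to $0$ and $2^{m-1}$ equal to $1$, so $\sum_n\wal_{\bsk}(\bsx_n)=0$.

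The main obstacle is the bookkeeping that identifies $L(n)$ with the coefficient of $x^{m-1}$ of $nQ_1\bmod p$: the reduction of Laurent-series coefficients to residues modulo $p$ requires carefully separating the polynomial part (which contributes nothing to $[x^{-1}]$) from the proper rational part $R(x)/p(x)$ with $\deg R<m$, and then checking that $[x^{-1}]R/p=r_{m-1}$ by matching coefficients against the monic leading term $x^m$ of $p$. Once this step is in place, the irreducibility of $p$ immediately yields the balanced-sum conclusion.
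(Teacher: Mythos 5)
Your proof is correct, and it is essentially the standard character-sum argument that the paper delegates to \cite[Lemma~10.6]{DP10} and \cite[Lemma~4.75]{DP10} rather than proving itself: reduce $\wal_{\bsk}(\bsx_n)$ to $(-1)^{[x^{-1}]\,n(x)Q(x)/p(x)}$ with $Q=\rtr_m(\bsk)\cdot\bsq$, then use that $[x^{-1}]R/p$ equals the $x^{m-1}$-coefficient of $R=nQ_1 \bmod p$ and that multiplication by $Q_1\not\equiv 0$ is a bijection of the field $\FF_2[x]/p$, so the exponent is balanced. All the steps, including the $[x^{-1}]$ bookkeeping you flag as the main obstacle, check out.
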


%%%%%%%%%%%%%%%%%%%%%%%%%%%%%%%%%%%%%%%%%%%%%%%%%%%%%%%%%%%
%%%%%%%%%%%%%%%%%%%%%%%%%%%%%%%%%%%%%%%%%%%%%%%%%%%%%%%%%%%
%%%%%%%%%%%%%%%%%%%%%%%%%%%%%%%%%%%%%%%%%%%%%%%%%%%%%%%%%%%
\section{Mean square weighted $\cL_2$ discrepancy}\label{disc}

In this section, we study the mean square weighted $\cL_2$ discrepancy of scrambled polynomial lattice rules. In \cite{DPxx}, Dick and Pillichshammer have derived the Walsh series expansion of the classical $\cL_2$ discrepancy. By a slight modification, we can rewrite the expression of the square weighted $\cL_2$ discrepancy given in Proposition \ref{prop:L2_disc} as follows.

\begin{proposition}\label{prop:L2_disc2}
For any point set $P_{N,s}=\{\bsx_0,\ldots, \bsx_{N-1}\}$ in $[0,1)^s$ and any sequence $\bsgamma=(\gamma_u)_{u\subseteq I_s}$ of weights, we have
  \begin{align}
    \cL_{2,N,\bsgamma}^2(P_{N,s}) = \sum_{\emptyset \ne u\subseteq I_s}\gamma_u \sum_{\bsk_u,\bsl_u\in \nat_0^{|u|}\setminus\{\bszero\}}r_u(\bsk_u,\bsl_u)\frac{1}{N^2}\sum_{n,n'=0}^{N-1}\wal_{\bsk_u}(\bsx_{n,u})\wal_{\bsl_u}(\bsx_{n',u}) , \label{eq:L2_disc}
  \end{align}
where $\bsk_u=(k_j)_{j\in u}$, $\bsl_u=(l_j)_{j\in u}$, $r_u(\bsk_u,\bsl_u)=\prod_{j\in u}r(k_j,l_j)$. Further, we have $r(k,l)=r(l,k)$, and for non-negative integers $0\le l\le k$ with dyadic expansions $k=2^{a_1-1}+\cdots+2^{a_v-1}$ with $a_1>\cdots > a_v>0$ and $l=2^{b_1-1}+\cdots+2^{b_w-1}$ with $b_1>\cdots > b_w>0$, we have
  \begin{align*}
    r(k,l) = \left\{ \begin{array}{ll}
    \frac{1}{3}              & \text{if}\ k=l=0 ,                                               \\
    \frac{1}{2^{a_1+2}}      & \text{if}\ v=1\ \text{and}\ l=0 ,                                \\
    -\frac{1}{2^{a_1+a_2+2}} & \text{if}\ v=2\ \text{and}\ l=0 ,                                \\
    -\frac{1}{2^{a_1+a_2+2}} & \text{if}\ v=w+2>2\ \text{and}\ a_3=b_1,\ldots, a_v=b_w ,        \\
    \frac{1}{3\cdot 4^{a_1}} & \text{if}\ k=l>0 ,                                               \\
    \frac{1}{2^{a_1+b_1+2}}  & \text{if}\ v=w, a_1\ne b_1\ \text{and}\ a_2=b_2,\ldots, a_v=b_v ,\\
    0                        & \text{otherwise} .
    \end{array} \right.
  \end{align*}
\end{proposition}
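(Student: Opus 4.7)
The plan is to deduce (\ref{eq:L2_disc}) directly from the Warnock-type identity in Proposition \ref{prop:L2_disc} by Walsh-expanding the single symmetric bivariate kernel
\[
K(x,y) := 1-\max(x,y)
\]
that underlies all three ingredients inside the bracket. First I would record the elementary identities $\int_0^1 K(x,y)\,dy = (1-x^2)/2$ and $\int_0^1\!\int_0^1 K(x,y)\,dx\,dy = 1/3$, so that the constant, linear and bilinear pieces of Proposition \ref{prop:L2_disc} are all manufactured from $K$, its marginal, and its total integral. Since $\{\wal_k(x)\wal_l(y):k,l\in\nat_0\}$ is a complete orthonormal system in $\cL_2([0,1]^2)$ by part (4) of Proposition \ref{prop:walsh}, $K$ admits the $\cL_2$-expansion
\[
K(x,y) = \sum_{k,l\in\nat_0} r(k,l)\,\wal_k(x)\wal_l(y),\qquad r(k,l):=\int_0^1\!\!\int_0^1 K(x,y)\,\wal_k(x)\,\wal_l(y)\,dx\,dy,
\]
with $r(k,l)=r(l,k)$ inherited from the symmetry of $K$.

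Integrating this expansion against $\wal_0\equiv 1$ and invoking part (2) of Proposition \ref{prop:walsh} then gives $(1-x^2)/2 = \sum_{k\in\nat_0} r(k,0)\wal_k(x)$ and $r(0,0)=1/3$. Taking the $|u|$-fold tensor product over $j\in u$ and substituting into Proposition \ref{prop:L2_disc}, the three inner terms become single or double sums over $\bsk_u,\bsl_u\in\nat_0^{|u|}$ of $r_u(\bsk_u,\bsl_u)$ multiplied by Walsh averages over the point set. What remains is the bookkeeping check that all contributions in which at least one of $\bsk_u,\bsl_u$ equals $\bszero$ cancel: the all-zero part collapses to $1/3^{|u|}-2/3^{|u|}+1/3^{|u|}=0$, and the singly-zero contributions cancel between the second and third sums because the two halves of the third sum (with $\bsk_u=\bszero$ or with $\bsl_u=\bszero$) are equal by $r(k,0)=r(0,k)$ and together balance the $-2$ coefficient in the second sum. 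Only the doubly nonzero double sum survives, and multiplying by $\gamma_u$ and summing over $\emptyset\neq u\subseteq I_s$ yields (\ref{eq:L2_disc}).

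The main obstacle is the explicit case-by-case evaluation of $r(k,l)$. For dyadic expansions $k=2^{a_1-1}+\cdots+2^{a_v-1}$ and $l=2^{b_1-1}+\cdots+2^{b_w-1}$, each Walsh function $\wal_k$ is piecewise constant on dyadic intervals of length $2^{-a_1}$, so $r(k,l)$ reduces to a finite sum of elementary integrals of $\max(x,y)$ against $\pm 1$. The oscillations of $\wal_k\wal_l$ annihilate the integrand unless the lower-order digits of $k$ and $l$ align in the specific patterns listed in the case table; this is what forces so many entries to vanish, and what leaves the nonvanishing cases expressible in closed form. This computation is essentially the one carried out by Dick and Pillichshammer in \cite{DPxx} for the unweighted $\cL_2$ discrepancy, and since introducing the weights $\gamma_u$ and summing over $u$ is linear in the underlying identity, no new computational work is needed here beyond citing that analysis.
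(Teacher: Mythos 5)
Your argument is correct and is essentially the route the paper itself takes: the paper gives no proof of this proposition, simply attributing the Walsh expansion of the (unweighted) $\cL_2$ discrepancy to Dick and Pillichshammer \cite{DPxx} and noting that the weighted case is a ``slight modification,'' and your tensorization of the Walsh expansion of $1-\max(x,y)$ over $j\in u$ together with the cancellation of the $\bszero$-indexed terms is precisely that modification. The only point left implicit is that substituting point values into the $\cL_2$-expansion of $K(x,y)=1-\max(x,y)$ requires pointwise (e.g.\ absolute) convergence of the Walsh series, which does hold here because of the decay of the coefficients $r(k,l)$; the case-by-case evaluation of $r(k,l)$ is, as you say, exactly the computation carried out in \cite{DPxx}.
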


The next corollary provides an expression for the mean square weighted $\cL_2$ discrepancy of scrambled polynomial lattice rules.
\begin{corollary}\label{cor:L2_disc}
For a polynomial lattice point set $P_{2^m,s}(\bsq,p)$, we have
  \begin{align*}
    \EE[\cL_{2,2^m,\bsgamma}^2(\tilde{P}_{2^m,s}(\bsq,p))] =  \sum_{\emptyset \ne v\subseteq I_s}\tilde{\gamma}_v\sum_{\substack{\bsk_v \in \nat^{|v|}\\ (\bsk_v,\bszero)\in D^*_{\bsq,p}}}\psi(\bsk_v,\bszero) ,
  \end{align*}
where we define
  \begin{align*}
    \tilde{\gamma}_v:=\sum_{v \subseteq u\subseteq I_s}\frac{\gamma_u}{3^{|u|}} ,
  \end{align*}
and the expectation is taken with respect to all the possible scrambled point sets $\tilde{P}_{2^m,s}(\bsq,p)$ of $P_{2^m,s}(\bsq,p)$. Further, we denote by $(\bsk_v,\bszero)$ the vector from $\nat_0^s$ with all the components whose indices are not in $v$ replaced by zero, and we have $\psi(k)=1/4^{a_1}$ for $k\in \nat$ with dyadic expansion $k=2^{a_1-1}+\cdots+2^{a_v-1}$ with $a_1>\cdots > a_v>0$, $\psi(0)=1$ and $\psi(\bsk)=\prod_{j=1}^{s}\psi(k_j)$. 
\end{corollary}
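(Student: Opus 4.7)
The plan is to apply Proposition \ref{prop:L2_disc2} to the scrambled point set $\tilde{P}_{2^m,s}(\bsq,p)$, take expectation by linearity, and use independence of Owen's scrambling across coordinates to factorize
  \begin{align*}
    \EE[\wal_{\bsk_u}(\tilde{\bsx}_{n,u})\wal_{\bsl_u}(\tilde{\bsx}_{n',u})] = \prod_{j\in u}\EE[\wal_{k_j}(\tilde{x}_{n,j})\wal_{l_j}(\tilde{x}_{n',j})].
  \end{align*}

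The first key step is to show that the one-dimensional Walsh cross-expectation vanishes whenever $k_j\ne l_j$. A careful analysis of Owen's nested permutations reveals that at any position where the dyadic contributions of $k_j$ and $l_j$ differ (or one is zero and the other positive), the exponent of the Walsh product contains a ``fresh'' random bit whose expectation is zero. Consequently only the diagonal terms $\bsk_u=\bsl_u$ survive, and using $r(0,0)=1/3$ and $r(k,k)=\psi(k)/3$ for $k>0$ from Proposition \ref{prop:L2_disc2} one obtains $r_u(\bsk_u,\bsk_u)=\psi(\bsk_u)/3^{|u|}$.

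The second and main step reduces the remaining sum to the dual polynomial lattice. I would introduce the 1D kernel
  \begin{align*}
    F(x,x'):=\sum_{k\ge 0}\psi(k)\,\EE[\wal_k(\tilde{x}_n)\wal_k(\tilde{x}_{n'})],
  \end{align*}
viewed as a function of the pre-scrambled $x=x_n$ and $x'=x_{n'}$. A direct computation shows that $F(x,x')$ depends only on the position of the first dyadic digit where $x$ and $x'$ differ, and moreover that $F$ coincides with its ordinary Walsh expansion $F(x,x')=\sum_{k\ge 0}\psi(k)\wal_k(x)\wal_k(x')$. Substituting this identity, the Owen expectations are replaced by products of ordinary Walsh functions, and Lemma \ref{lamma:dual_walsh} converts the inner $n,n'$ sum into the indicator of membership in $D^*_{\bsq,p}$, giving
  \begin{align*}
    \sum_{\bsk_u\ne\bszero}\psi(\bsk_u)\cdot\frac{1}{2^{2m}}\sum_{n,n'=0}^{2^m-1}\prod_{j\in u}\EE[\wal_{k_j}(\tilde{x}_{n,j})\wal_{k_j}(\tilde{x}_{n',j})] = \sum_{\substack{\bsk_u\ne\bszero \\ (\bsk_u,\bszero)\in D^*_{\bsq,p}}}\psi(\bsk_u).
  \end{align*}

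Finally I would regroup the outer sum over $u$ by the support $v:=\{j\in u:k_j>0\}$ of $\bsk_u$. Writing $\bsk_u=(\bsk_v,\bszero_{u\setminus v})$ with $\bsk_v\in\nat^{|v|}$ and observing that both $\psi(\bsk_u)=\psi(\bsk_v,\bszero)$ and $(\bsk_u,\bszero)\in D^*_{\bsq,p}$ iff $(\bsk_v,\bszero)\in D^*_{\bsq,p}$, exchanging the order of summation collapses the coefficient to $\sum_{v\subseteq u\subseteq I_s}\gamma_u/3^{|u|}=\tilde{\gamma}_v$, yielding the claimed expression. The main obstacles are establishing (i) the vanishing of the 1D Walsh cross-expectations for $k_j\ne l_j$ and (ii) the Walsh-series identity for the kernel $F(x,x')$; both require a careful but elementary analysis combining Owen's nested permutation scheme with the dyadic/Walsh structure.
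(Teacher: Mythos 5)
Your proposal is correct, and its overall skeleton coincides with the paper's: apply Proposition \ref{prop:L2_disc2}, use the coordinatewise independence of Owen's scrambling together with the vanishing of $\EE[\wal_k(y)\wal_l(y')]$ for $k\ne l$ (the paper simply cites Owen's lemma, \cite[Lemma~13.3]{DP10}, rather than reproving it) to keep only the diagonal terms with $r_u(\bsk_u,\bsk_u)=\psi(\bsk_u)/3^{|u|}$, convert the surviving scrambling expectations into dual-lattice membership, and finally regroup by the support $v$ of $\bsk_u$ and swap sums to produce $\tilde{\gamma}_v$. Where you genuinely diverge is the middle step. The paper partitions $\bsk_v$ into dyadic boxes $\BB_{\bsl_v}$, computes the box sums $\sigma_{\bsl_v}$ of $r_v(\bsk_v,\bsk_v)$, and invokes the gain-coefficient identity $G_{\bsl_v}=2^{|v|-|\bsl_v|_1}\#\{\bsk_v\in\BB_{\bsl_v}:(\bsk_v,\bszero)\in D^*_{\bsq,p}\}$ from the proof of \cite[Corollary~13.7]{DP10}. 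You instead prove the one-dimensional kernel identity $\sum_{k\ge0}\psi(k)\EE[\wal_k(y)\wal_k(y')]=\sum_{k\ge0}\psi(k)\wal_k(x)\wal_k(x')$ and then apply Lemma \ref{lamma:dual_walsh} directly. The two are equivalent: both rest on $\psi$ being constant on the blocks $2^{a-1}\le k<2^a$, so that the block sums of $\EE[\wal_k(y\oplus y')]$ (which equal $2^{a-1}$, $-2^{a-1}$ or $0$ according as $a<i$, $a=i$ or $a>i$, where $i$ is the first dyadic digit at which $x$ and $x'$ differ) match the block sums of $\wal_k(x\oplus x')$. Your version has the advantage of being self-contained, avoiding the imported gain-coefficient machinery. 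Two bookkeeping points you should make explicit: the kernel identity can only be used after interchanging the sum over $\bsk_u$ with the product over $j\in u$, which requires temporarily including $\bsk_u=\bszero$ and subtracting its contribution (equal to $1$ on both sides, since $\bszero\in D^*_{\bsq,p}$); and this interchange is legitimate because $\sum_{k\ge 0}\psi(k)<\infty$ gives absolute convergence.
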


\begin{proof}
Let $y,y'\in [0,1)$ be two points obtained by applying Owen's scrambling to the points $x,x'\in [0,1)$. From Owen's lemma \cite[Lemma 13.3]{DP10}, we have
  \begin{align}\label{eq:owen_lemma}
    \EE[\wal_k(y)\wal_l(y')]=0 ,
  \end{align}
whenever $k\ne l$. In the following, we denote by $y_{n,j}$ the point obtained by applying Owen's scrambling to the point $x_{n,j}$. Using (\ref{eq:L2_disc}), (\ref{eq:owen_lemma}), Proposition \ref{prop:walsh} and the linearity of expectation, we have
  \begin{align*}
    & \EE[\cL_{2,2^m,\bsgamma}^2(\tilde{P}_{2^m,s}(\bsq,p))] \\
    = & \sum_{\emptyset \ne u\subseteq I_s}\gamma_u \sum_{\bsk_u,\bsl_u\in \nat_0^{|u|}\setminus\{\bszero\}}r_u(\bsk_u,\bsl_u)\frac{1}{2^{2m}}\sum_{n,n'=0}^{2^m-1}\prod_{j\in u}\EE[\wal_{k_j}(y_{n,j})\wal_{l_j}(y_{n',j})] \\
    = & \sum_{\emptyset \ne u\subseteq I_s}\gamma_u \sum_{\bsk_u\in \nat_0^{|u|}\setminus\{\bszero\}}r_u(\bsk_u,\bsk_u)\frac{1}{2^{2m}}\sum_{n,n'=0}^{2^m-1}\prod_{j\in u}\EE[\wal_{k_j}(y_{n,j}\oplus y_{n',j})] \\
    = & \sum_{\emptyset \ne u\subseteq I_s}\gamma_u \sum_{\emptyset \ne v\subseteq u}\frac{1}{3^{|u\setminus v|}}\sum_{\bsk_v\in \nat^{|v|}}r_v(\bsk_v,\bsk_v)\frac{1}{2^{2m}}\sum_{n,n'=0}^{2^m-1}\prod_{j\in v}\EE[\wal_{k_j}(y_{n,j}\oplus y_{n',j})] .
  \end{align*}

Now we need to introduce the following notations. For $\bsl_v=(l_j)_{j\in v} \in \nat^{|v|}$, we define a set $\BB_{\bsl_v}$ as
  \begin{align*}
     \BB_{\bsl_v}:=\{ (k_j)_{j\in v}\in \nat^{|v|}: 2^{l_j-1} \le k_j< 2^{l_j}\ \text{for}\ j\in v\} .
  \end{align*}
We denote by $\sigma_{\bsl_v}$ the sum of $r_v(\bsk_v,\bsk_v)$ over all $\bsk_v\in \BB_{\bsl_v}$. We have
  \begin{align*}
     \sigma_{\bsl_v} & := \sum_{\bsk_v\in \BB_{\bsl_v}}r_v(\bsk_v,\bsk_v) \; = \; \sum_{\bsk_v\in \BB_{\bsl_v}}\prod_{j\in v}r(k_j,k_j) \\
     & = \prod_{j\in v}\sum_{k_j=2^{l_{j}-1}}^{2^{l_j}-1}r(k_j,k_j) \; = \; \prod_{j\in v}\frac{2^{l_j}-2^{l_{j}-1}}{3\cdot 4^{l_j}} \; = \; \frac{1}{3^{|v|}\cdot 2^{|v|+|\bsl_v|_1}} ,
  \end{align*}
where $|\bsl_v|_1:=\sum_{j\in v}l_j$. Further, we introduce a so-called gain coefficient, which is independent of the choice of $\bsk_v\in \BB_{\bsl_v}$,
  \begin{align*}
    G_{\bsl_v} := \frac{1}{2^{2m}}\sum_{n,n'=0}^{2^m-1}\prod_{j\in v}\EE[\wal_{k_j}(y_{n,j}\oplus y_{n',j})] = 2^{|v|-|\bsl_v|_1}\sum_{\substack{\bsk_v\in \BB_{\bsl_v}\\ (\bsk_v,\bszero)\in D^*_{\bsq,p}}}1 ,
  \end{align*}
where the last equality appeared in the proof of \cite[Corollary~13.7]{DP10}. Using these notations and results, we have
  \begin{align*}
    & \EE[\cL_{2,2^m,\bsgamma}^2(\tilde{P}_{2^m,s}(\bsq,p))] \\
    = & \sum_{\emptyset \ne u\subseteq I_s}\gamma_u \sum_{\emptyset \ne v\subseteq u}\frac{1}{3^{|u\setminus v|}}\sum_{\bsl_v\in \nat^{|v|}}\sum_{\bsk_v\in \BB_{\bsl_v}}r_v(\bsk_v,\bsk_v)\frac{1}{2^{2m}}\sum_{n,n'=0}^{2^m-1}\prod_{j\in v}\EE[\wal_{k_j}(y_{n,j}\oplus y_{n',j})] \\
    = & \sum_{\emptyset \ne u\subseteq I_s}\gamma_u \sum_{\emptyset \ne v\subseteq u}\frac{1}{3^{|u\setminus v|}}\sum_{\bsl_v\in \nat^{|v|}}G_{\bsl_v}\sigma_{\bsl_v} \\
    = & \sum_{\emptyset \ne u\subseteq I_s}\frac{\gamma_u}{3^{|u|}} \sum_{\emptyset \ne v\subseteq u}\sum_{\bsl_v\in \nat^{|v|}}\frac{1}{4^{|\bsl_v|_1}}\sum_{\substack{\bsk_v\in \BB_{\bsl_v}\\ (\bsk_v,\bszero)\in D^*_{\bsq,p}}}1 \\
    = & \sum_{\emptyset \ne u\subseteq I_s}\frac{\gamma_u}{3^{|u|}} \sum_{\emptyset \ne v\subseteq u}\sum_{\substack{\bsk_v \in \nat^{|v|}\\ (\bsk_v,\bszero)\in D^*_{\bsq,p}}}\psi(\bsk_v,\bszero) .
  \end{align*}
The proof is complete by swapping the order of sums.
\end{proof}

We denote the sum in Corollary \ref{cor:L2_disc} by
  \begin{align}\label{eq:L2_disc2}
    B(\bsq,\bsgamma) =  \sum_{\emptyset \ne v\subseteq I_s}\tilde{\gamma}_v \sum_{\substack{\bsk_v \in \nat^{|v|}\\ (\bsk_v,\bszero)\in D^*_{\bsq,p}}}\psi(\bsk_v,\bszero) .
  \end{align}
Using the property of the dual polynomial lattice $D^*_{\bsq,p}$ shown in Lemma \ref{lamma:dual_walsh}, we can derive a more computable form of $B(\bsq,\bsgamma)$. In the following, we write $\log_2$ for the logarithm in base 2 and we set $2^{\lfloor \log_2 0\rfloor}=0$.

\begin{lemma}
Let $B(\bsq,\bsgamma)$ be given by (\ref{eq:L2_disc2}). Then we have
  \begin{align}\label{eq:criterion}
    B(\bsq,\bsgamma) =  \frac{1}{2^m}\sum_{n=0}^{2^m-1}\sum_{\emptyset \ne v\subseteq I_s}\tilde{\gamma}_v\prod_{j\in v}\tilde{\phi}(x_{n,j}) ,
  \end{align}
where for $x\in [0,1)$ we set
  \begin{align*}
    \tilde{\phi}(x) = \frac{1-3\cdot 2^{\lfloor \log_2 x\rfloor}}{2} ,
  \end{align*}
and $\tilde{\gamma}_v$ is defined as in Corollary \ref{cor:L2_disc}. In particular, in case of product weights, we have
  \begin{align}\label{eq:criterion_product}
    B(\bsq,\bsgamma) = -\prod_{j=1}^{s}\left(1+\frac{\gamma_{j}}{3}\right)+\frac{1}{2^m}\sum_{n=0}^{2^m-1}\prod_{j=1}^{s}\left[1+\gamma_{j}\phi (x_{n,j})\right] ,
  \end{align}
where for $x\in [0,1)$ we set
  \begin{align*}
    \phi(x) = \frac{1-2^{\lfloor \log_2 x\rfloor}}{2} .
  \end{align*}
\end{lemma}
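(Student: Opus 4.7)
The plan is to start from the expression (\ref{eq:L2_disc2}) for $B(\bsq,\bsgamma)$ and to replace the indicator that $(\bsk_v,\bszero)\in D^*_{\bsq,p}$ by the Walsh-function character sum supplied by Lemma \ref{lamma:dual_walsh}. Concretely, for each non-empty $v\subseteq I_s$ and each $\bsk_v\in\nat^{|v|}$,
\begin{align*}
\mathbf{1}\!\left[(\bsk_v,\bszero)\in D^*_{\bsq,p}\right]
 = \frac{1}{2^m}\sum_{n=0}^{2^m-1}\wal_{(\bsk_v,\bszero)}(\bsx_n)
 = \frac{1}{2^m}\sum_{n=0}^{2^m-1}\prod_{j\in v}\wal_{k_j}(x_{n,j}).
\end{align*}
Substituting this into (\ref{eq:L2_disc2}), interchanging the sums over $n$ and $\bsk_v$, and using that $\psi(\bsk_v,\bszero)=\prod_{j\in v}\psi(k_j)$ factors, reduces the task to evaluating the one-dimensional sum
\begin{align*}
S(x):=\sum_{k=1}^{\infty}\psi(k)\wal_k(x)
\end{align*}
for $x\in[0,1)$, since then $B(\bsq,\bsgamma)=\frac{1}{2^m}\sum_{n=0}^{2^m-1}\sum_{\emptyset\ne v\subseteq I_s}\tilde\gamma_v\prod_{j\in v}S(x_{n,j})$.

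The core computation is to identify $S(x)$ with $\tilde\phi(x)$. I would partition the $k$-sum by the leading bit $a_1=\lfloor\log_2 k\rfloor+1$, so that $\psi(k)=1/4^{a_1}$ is constant on each block $2^{a_1-1}\le k<2^{a_1}$. Writing $k=2^{a_1-1}+j$ with $0\le j<2^{a_1-1}$ and using Proposition \ref{prop:walsh}(1), each block contributes $(-1)^{x_{a_1}}\sum_{j=0}^{2^{a_1-1}-1}\wal_j(x)$. The inner sum is a standard character sum that factorises as $\prod_{i=1}^{a_1-1}(1+(-1)^{x_i})$, hence equals $2^{a_1-1}$ if $x_1=\cdots=x_{a_1-1}=0$ and vanishes otherwise. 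Letting $a^*$ denote the position of the first non-zero dyadic digit of $x$ (so that $2^{-a^*}=2^{\lfloor\log_2 x\rfloor}$), all surviving contributions come from $1\le a_1\le a^*$; summing the geometric series $\sum_{a=1}^{a^*-1}2^{-(a+1)}$ and subtracting the single $a_1=a^*$ term with sign $-1$ yields $\tfrac12-\tfrac{3}{2}\cdot 2^{\lfloor\log_2 x\rfloor}=\tilde\phi(x)$. This establishes (\ref{eq:criterion}). The one mildly delicate point here is the bookkeeping for $a_1=a^*$ versus $a_1<a^*$; everything else is straightforward.

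To derive (\ref{eq:criterion_product}) from (\ref{eq:criterion}) under product weights $\gamma_u=\prod_{j\in u}\gamma_j$, I would first simplify $\tilde\gamma_v$. Summing over $u=v\cup w$ with $w\subseteq I_s\setminus v$ gives
\begin{align*}
\tilde\gamma_v=\prod_{j\in v}\frac{\gamma_j}{3}\prod_{j\in I_s\setminus v}\Bigl(1+\frac{\gamma_j}{3}\Bigr).
\end{align*}
Inserting this into (\ref{eq:criterion}), the sum over $\emptyset\ne v\subseteq I_s$ collapses via the identity $\sum_{v\subseteq I_s}\prod_{j\in v}A_j\prod_{j\notin v}B_j=\prod_{j=1}^{s}(A_j+B_j)$ with $A_j=\tfrac{\gamma_j}{3}\tilde\phi(x_{n,j})$ and $B_j=1+\tfrac{\gamma_j}{3}$, after subtracting the $v=\emptyset$ term $\prod_j(1+\gamma_j/3)$. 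The final step is the algebraic identity $1+\tilde\phi(x)=3\phi(x)$, which follows directly from the definitions of $\tilde\phi$ and $\phi$; with it, $A_j+B_j$ simplifies to $1+\gamma_j\phi(x_{n,j})$, giving (\ref{eq:criterion_product}).

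Overall, the main obstacle I anticipate is the closed-form evaluation of $S(x)$; the product-weight specialisation is then a purely combinatorial unfolding, and the rest of the argument consists of interchanging sums and applying Lemma \ref{lamma:dual_walsh}.
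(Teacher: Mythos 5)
Your proposal is correct and follows essentially the same route as the paper: apply Lemma \ref{lamma:dual_walsh} to convert the dual-lattice condition in (\ref{eq:L2_disc2}) into a Walsh character sum, evaluate $\sum_{k\ge 1}\psi(k)\wal_k(x)=\tilde\phi(x)$, and then collapse the sum over $v$ under product weights using the factorised form of $\tilde\gamma_v$ and the identity $1+\tilde\phi(x)=3\phi(x)$. The only difference is that you carry out the evaluation of $S(x)$ explicitly (correctly, by blocking on the leading bit), whereas the paper delegates that step to a cited result of Baldeaux.
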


\begin{proof}
Applying Lemma \ref{lamma:dual_walsh} to $B(\bsq,\bsgamma)$, we have
  \begin{align*}
    B(\bsq,\bsgamma) & = \sum_{\emptyset \ne v\subseteq I_s}\tilde{\gamma}_v\sum_{\bsk_v \in \nat^{|v|}}\psi(\bsk_v,\bszero)\frac{1}{2^m}\sum_{n=0}^{2^m-1}\wal_{(\bsk_v,\bszero)}(\bsx_n) \\
    & = \frac{1}{2^m}\sum_{n=0}^{2^m-1}\sum_{\emptyset \ne v\subseteq I_s}\tilde{\gamma}_v\prod_{j\in v}\left[ \sum_{k_j=1}^{\infty}\psi(k_j)\wal_{k_j}(x_{n,j})\right] .
  \end{align*}
For the innermost sum, we have by following the similar line as the proof of \cite[Theorem~7.3]{Bal10}
  \begin{align*}
    \sum_{k=1}^{\infty}\psi(k)\wal_{k}(x) = \sum_{l=1}^{\infty}\frac{1}{4^l}\sum_{k=2^{l-1}}^{2^l-1}\wal_{k}(x) = \frac{1-3\cdot 2^{\lfloor \log_2(x)\rfloor}}{2} = \tilde{\phi}(x) .
  \end{align*}
Thus the result for the first part of the lemma follows.

Next in case of $\gamma_v=\prod_{j\in v}\gamma_j$, by letting $\gamma_{\emptyset}=1$, we have
  \begin{align*}
    \tilde{\gamma}_v = \prod_{j\in v}\frac{\gamma_j}{3}\left( \sum_{w\subseteq I_s\setminus v}\prod_{j'\in w}\frac{\gamma_{j'}}{3}\right) = \prod_{j\in v}\frac{\gamma_j}{3}\prod_{j'\in I_s\setminus v}\left(1+\frac{\gamma_{j'}}{3}\right) .
  \end{align*}
Inserting this result into (\ref{eq:criterion}), we have
  \begin{align*}
    B(\bsq,\bsgamma) & = \frac{1}{2^m}\sum_{n=0}^{2^m-1}\sum_{\emptyset \ne v\subseteq I_s}\prod_{j'\in I_s\setminus v}\left(1+\frac{\gamma_{j'}}{3}\right)\prod_{j\in v}\frac{\gamma_j}{3}\tilde{\phi}(x_{n,j}) \\
    & = -\prod_{j=1}^{s}\left(1+\frac{\gamma_{j}}{3}\right)+\frac{1}{2^m}\sum_{n=0}^{2^m-1}\prod_{j=1}^{s}\left[ \left( 1+\frac{\gamma_j}{3}\right)+\frac{\gamma_j}{3}\tilde{\phi}(x_{n,j})\right] \\
    & = -\prod_{j=1}^{s}\left(1+\frac{\gamma_{j}}{3}\right)+\frac{1}{2^m}\sum_{n=0}^{2^m-1}\prod_{j=1}^{s}\left[ 1+\gamma_{j}\phi(x_{n,j})\right].
  \end{align*}
Thus the proof for the second part of the lemma is complete.
\end{proof}

\begin{remark}
Since we have the following recursion in the inner sum of (\ref{eq:criterion})
  \begin{align*}
    \sum_{\emptyset \ne v\subseteq I_r}\tilde{\gamma}_v\prod_{j\in v}\tilde{\phi}(x_{n,j}) = \tilde{\gamma}_{\{r\}}\tilde{\phi}(x_{n,r})+\sum_{\emptyset \ne v\subseteq I_{r-1}}\left( 1+\frac{\tilde{\gamma}_{v\cup \{r\}}}{\tilde{\gamma}_v}\tilde{\phi}(x_{n,r})\right)\tilde{\gamma}_v\prod_{j\in v}\tilde{\phi}(x_{n,j}) ,
  \end{align*}
for $1\le r\le s$, the computational complexity of computing $B(\bsq,\bsgamma)$ with general weights is $O(2^{m+s})$. In case of product weights, on the other hand, the computational complexity of computing $B(\bsq,\bsgamma)$ reduces to $O(s2^m)$.
\end{remark}

%%%%%%%%%%%%%%%%%%%%%%%%%%%%%%%%%%%%%%%%%%%%%%%%%%%%%%%%%%%
%%%%%%%%%%%%%%%%%%%%%%%%%%%%%%%%%%%%%%%%%%%%%%%%%%%%%%%%%%%
%%%%%%%%%%%%%%%%%%%%%%%%%%%%%%%%%%%%%%%%%%%%%%%%%%%%%%%%%%%
\section{Construction of polynomial lattice rules}\label{cbc}

In this section, we first show how to find a vector $\bsq$ by using the CBC construction algorithm for general weights and product weights respectively. We prove that an upper bound on $B(\bsq,\bsgamma)$ satisfied by our algorithm converges at almost the best possible rate of $N^{-2+\delta}$ for all $\delta >0$. Further, in this section we discuss strong tractability of our algorithm.

We restrict each polynomial $q_j$ such that $q_j\ne 0$ and $\deg(q_j)<m$. In the following we denote by $R_m$ the set of all the non-zero polynomials over $\FF_2$ with degree less than $m$, i.e.,
  \begin{align*}
    R_m = \{q\in \FF_2[x]: \deg(q)<m \ \text{and}\ q\ne 0\} .
  \end{align*}
It is clear that $|R_m|=2^m-1$. We write $\bsq_\tau=(q_1,\ldots, q_\tau)$ for $1\le \tau\le s$.

%%%%%%%%%%%%%%%%%%%%%%%%%%%%%%%%%%%%%%%%%%%%%%%%%%%%%%%%%%%
\subsection{General weights}
The CBC construction for general weights proceeds as follows.

\begin{algorithm}\label{cbc_algor}(CBC construction for general weights)
For $m,s\in \nat$ and any sequence of weights $\bsgamma=(\gamma_u)_{u\subseteq I_s}$, we proceed as follows.
	\begin{enumerate}
		\item Choose an irreducible polynomial $p\in \FF_2[x]$ with $\deg(p)=m$.
		\item Set $q_1^*=1$.
		\item For $\tau=2,\ldots, s$, find $q^*_{\tau}$ by minimizing $B((\bsq^{*}_{\tau-1},q_\tau),\bsgamma)$ as a function of $q_\tau\in R_{m}$
		where 
  \begin{align}\label{eq:cbc_criterion_general}
    B((\bsq^{*}_{\tau-1},q_\tau),\bsgamma) = & \sum_{\emptyset \ne v\subseteq I_\tau}\tilde{\gamma}_v \sum_{\substack{\bsk_v \in \nat^{|v|}\\ (\bsk_v,\bszero)\in D^*_{(\bsq^{*}_{\tau-1},q_\tau),p}}}\psi(\bsk_v,\bszero) \\
    = & \frac{1}{2^m}\sum_{n=0}^{2^m-1}\sum_{\emptyset \ne v\subseteq I_\tau}\tilde{\gamma}_v\prod_{j\in v}\tilde{\phi}(x_{n,j}) \nonumber ,
  \end{align}
        in which $\tilde{\gamma}_v$ is defined as in Corollary \ref{cor:L2_disc}.
	\end{enumerate}
\end{algorithm}

\begin{remark}
Since computing $\tilde{\gamma}_v$ in Step 3. of Algorithm \ref{cbc_algor} requires $\gamma_v$ such that $v\nsubseteq I_\tau$, Algorithm \ref{cbc_algor} is not extensible in $s$. A similar situation occurs for lattice rules as has been discussed in \cite[Chapter~5.4]{DKS13}. From the last line in the proof of Corollary \ref{cor:L2_disc}, it is possible to replace $B((\bsq^{*}_{\tau-1},q_\tau),\bsgamma)$ by
\begin{align*}
    B((\bsq^{*}_{\tau-1},q_\tau),\bsgamma) & =  \sum_{\emptyset \ne u\subseteq I_\tau}\frac{\gamma_u}{3^{|u|}} \sum_{\emptyset \ne v\subseteq u}\sum_{\substack{\bsk_v \in \nat^{|v|}\\ (\bsk_v,\bszero)\in D^*_{(\bsq^{*}_{\tau-1},q_\tau),p}}}\psi(\bsk_v,\bszero) .
  \end{align*}
Then we can make the construction algorithm extensible in $s$. Because of the technical difficulty in treating two outermost sums in the right-hand side, however, it is hard to prove that polynomial lattice rules constructed using this replaced criterion achieve almost the best possible rate of convergence.
\end{remark}

The next theorem provides an upper bound on $B(\bsq_\tau,\bsgamma)$ for the polynomials $\bsq^{*}_\tau$ for $1\le \tau\le s$ constructed according to Algorithm \ref{cbc_algor}. It converges at almost the best possible rate of $N^{-2+\delta}$ for all $\delta >0$. In the proof of the theorem, we use the following inequality, which states that for any sequence $(a_i)_{i\in \nat}$ of non-negative real numbers we have
  \begin{align}\label{eq:jensen}
    \left( \sum a_i\right)^{\lambda} \le \sum a_i^{\lambda} ,
  \end{align}
for any $0<\lambda \le 1$.

\begin{theorem}\label{theorem1}
Let $p\in \FF_2[x]$ be an irreducible polynomial with $\deg(p)=m$. Suppose that $\bsq^{*}_s\in R_m^s$ is constructed according to Algorithm \ref{cbc_algor}. Then for all $\tau=1,\ldots, s$ we have
  \begin{align}\label{eq:theorem1}
    B(\bsq^{*}_\tau,\bsgamma) \le \frac{1}{(2^m-1)^{1/\lambda}}\left[ \sum_{\emptyset \ne v\subseteq I_\tau}\tilde{\gamma}_v^{\lambda} \frac{1}{(2^{2\lambda}-2)^{|v|}} \right]^{1/\lambda} ,
  \end{align}
for $1/2<\lambda \le 1$, where $\tilde{\gamma}_v$ is defined as in Corollary \ref{cor:L2_disc}.
\end{theorem}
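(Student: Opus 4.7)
My approach will be induction on $\tau \in \{1,\ldots,s\}$, powered by an averaging step via the Jensen-type inequality (\ref{eq:jensen}). First I would apply (\ref{eq:jensen}) termwise to (\ref{eq:L2_disc2}) with exponent $\lambda \in (1/2,1]$, using the product structure $\psi(\bsk_v, \bszero) = \prod_{j\in v}\psi(k_j)$, to produce the surrogate
\begin{align*}
[B(\bsq_\tau,\bsgamma)]^\lambda \le \Theta(\bsq_\tau) := \sum_{\emptyset \ne v\subseteq I_\tau}\tilde{\gamma}_v^\lambda \sum_{\substack{\bsk_v \in \nat^{|v|}\\ (\bsk_v,\bszero)\in D^*_{\bsq_\tau,p}}}\prod_{j\in v} \psi(k_j)^{\lambda},
\end{align*}
valid for any $\bsq_\tau \in R_m^\tau$. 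The task then becomes proving $\Theta(\bsq^*_\tau) \le M_\tau := (2^m-1)^{-1}\sum_{\emptyset \ne v \subseteq I_\tau}\tilde{\gamma}_v^\lambda (2^{2\lambda}-2)^{-|v|}$; raising to the $(1/\lambda)$-th power will then yield (\ref{eq:theorem1}). The base case $\tau=1$ is immediate: with $q_1^* = 1$, irreducibility of $p$ together with $\deg \mathrm{tr}_m(k) < m$ reduces $\mathrm{tr}_m(k) \equiv 0 \pmod p$ to $\mathrm{tr}_m(k)=0$, so $D^*_{1,p}$ is the set of non-negative multiples of $2^m$, and a geometric sum gives $\Theta(\bsq_1^*) = \tilde{\gamma}_{\{1\}}^\lambda / [2^{2m\lambda}(2^{2\lambda}-2)] \le M_1$, where the last inequality uses that $\lambda > 1/2$ forces $2^{2m\lambda} \ge 2^m - 1$.

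For the induction step, the CBC minimisation combined with monotonicity of $x \mapsto x^\lambda$ provides
\begin{align*}
[B(\bsq^*_\tau,\bsgamma)]^\lambda \le \frac{1}{|R_m|}\sum_{q_\tau \in R_m}\Theta((\bsq^*_{\tau-1},q_\tau)).
\end{align*}
I would then split the sum defining $\Theta((\bsq^*_{\tau-1},q_\tau))$ according to whether $\tau \in v$. The contribution from $\tau \notin v$ is independent of $q_\tau$ and equals $\Theta(\bsq^*_{\tau-1})$, which is at most $M_{\tau-1}$ by induction. For the contribution from $\tau \in v$, write $v = v' \cup \{\tau\}$ with $v' \subseteq I_{\tau-1}$; the dual-lattice condition $(\bsk_{v'},k_\tau,\bszero) \in D^*_{(\bsq^*_{\tau-1},q_\tau),p}$ becomes the linear equation $\alpha + \beta q_\tau \equiv 0 \pmod p$ in $q_\tau$, with $\alpha := \mathrm{tr}_m(\bsk_{v'}) \cdot \bsq^*_{\tau-1,v'}$ and $\beta := \mathrm{tr}_m(k_\tau)$.

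The hard part will be the clean evaluation and upper bound of the $q_\tau$-average of the indicator of this linear equation. Because $p$ is irreducible and $\deg \beta < m$, a short case analysis shows that the proportion of $q_\tau \in R_m$ solving the equation is $1$ when $\alpha \equiv \beta \equiv 0$, equals $1/(2^m-1)$ when both $\alpha, \beta \not\equiv 0$, and is $0$ otherwise. Combined with the two closed forms
\begin{align*}
\sum_{k \ge 1}\psi(k)^\lambda = \frac{1}{2^{2\lambda}-2}, \qquad \sum_{k \ge 1,\, 2^m \mid k}\psi(k)^\lambda = \frac{1}{2^{2m\lambda}(2^{2\lambda}-2)},
\end{align*}
whose convergence requires $\lambda > 1/2$, the $q_\tau$-average $\bar S(v')$ of the new piece will be a linear combination of the non-negative inductive quantity $T_{v'} := \sum_{\bsk_{v'} \in \nat^{|v'|},\,(\bsk_{v'},\bszero) \in D^*_{\bsq^*_{\tau-1},p}}\prod_{j \in v'}\psi(k_j)^\lambda$ and an explicit constant. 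A direct calculation shows that the coefficient of $T_{v'}$ is $(2^{m(1-2\lambda)}-1)/[(2^{2\lambda}-2)(2^m-1)]$, which is non-positive precisely when $\lambda > 1/2$; discarding this non-positive term (the crucial place where $\lambda > 1/2$ is used) leaves the clean estimate $\bar S(v') \le 1/[(2^m-1)(2^{2\lambda}-2)^{|v'|+1}]$. Summing over $v' \subseteq I_{\tau-1}$ and reindexing $v = v' \cup \{\tau\}$ contributes exactly $M_\tau - M_{\tau-1}$, and adding $\Theta(\bsq^*_{\tau-1}) \le M_{\tau-1}$ closes the induction.
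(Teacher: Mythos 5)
Your overall strategy (induction on $\tau$, an averaging argument over $q_\tau\in R_m$, and the Jensen-type inequality (\ref{eq:jensen})) is the same as the paper's, your base case is correct, and your evaluation of the averaged ``new'' piece is correct — indeed your case analysis of the linear equation $\alpha+\beta q_\tau\equiv 0\pmod p$ and the observation that the coefficient of $T_{v'}$ is $(2^{m(1-2\lambda)}-1)/[(2^{2\lambda}-2)(2^m-1)]\le 0$ is an exact version of the paper's cruder step of replacing $2^{-2\lambda m}$ by $(2^m-1)^{-1}$ and merging the two complementary sums over $\bsk_{v'}$. However, there is a genuine gap in how you close the induction. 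You apply (\ref{eq:jensen}) to the \emph{whole} of $B$ to form the surrogate $\Theta$, announce the goal $\Theta(\bsq^*_\tau)\le M_\tau$, and then in the inductive step invoke ``$\Theta(\bsq^*_{\tau-1})\le M_{\tau-1}$ by induction.'' But your argument never establishes this. The averaging step only shows that $\min_{q_\tau}\Theta((\bsq^*_{\tau-1},q_\tau))$ — and hence $[B(\bsq^*_\tau)]^\lambda$ — is at most the average of $\Theta$; it does \emph{not} bound $\Theta(\bsq^*_\tau)$ itself, because Algorithm \ref{cbc_algor} selects $q^*_\tau$ to minimize $B((\bsq^*_{\tau-1},\cdot))$, not $\Theta((\bsq^*_{\tau-1},\cdot))$. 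For $\lambda<1$ these are genuinely different objective functions (a sum versus a sum of $\lambda$-th powers), and the $B$-minimizer need not have below-average $\Theta$. So each step of your induction delivers a conclusion about $[B(\bsq^*_\tau)]^\lambda$ while the next step consumes a hypothesis about $\Theta(\bsq^*_\tau)$, and the two do not match.

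The repair is exactly the paper's organization: before taking any $\lambda$-th powers, split $B((\bsq^*_{\tau-1},q_\tau),\bsgamma)=B(\bsq^*_{\tau-1},\bsgamma)+\theta(q_\tau)$, where $\theta(q_\tau)\ge 0$ collects the terms with $\tau\in v$. Since the first summand is independent of $q_\tau$, the CBC minimizer of $B$ is also the minimizer of $\theta$, hence $\theta^\lambda(q^*_\tau)\le (2^m-1)^{-1}\sum_{q_\tau\in R_m}\theta^\lambda(q_\tau)$, and only now does one apply (\ref{eq:jensen}) to $\theta$ to pass to its $\lambda$-surrogate, whose average you have already computed to be at most $M_\tau-M_{\tau-1}$. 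Combining with $(a+b)^\lambda\le a^\lambda+b^\lambda$ gives $[B(\bsq^*_\tau,\bsgamma)]^\lambda\le [B(\bsq^*_{\tau-1},\bsgamma)]^\lambda+\theta^\lambda(q^*_\tau)$, so the induction hypothesis can be taken to be $[B(\bsq^*_{\tau-1},\bsgamma)]^\lambda\le M_{\tau-1}$ — which is precisely what each step then produces. With this reordering, all of your computations go through verbatim.
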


\begin{proof}
We prove the theorem by induction on $\tau$. For $\tau=1$, we have
  \begin{align*}
     B(q^{*}_1,\bsgamma) & = \tilde{\gamma}_{\{1\}}\sum_{\substack{k=1\\ 2^m\mid k}}^{\infty}\psi(k) \; = \; \tilde{\gamma}_{\{1\}}\sum_{a=1}^{\infty}\sum_{\substack{k=2^{a-1}\\ 2^m\mid k}}^{2^a-1}\psi(k) \\
     & = \tilde{\gamma}_{\{1\}}\sum_{a=m+1}^{\infty}2^{a-m-1}\cdot 2^{-2a} \; = \; \tilde{\gamma}_{\{1\}}\frac{1}{2^{2m+1}} \; \le \; \tilde{\gamma}_{\{1\}}\left[\frac{1}{(2^{m}-1)(2^{2\lambda}-2)}\right]^{1/\lambda} ,
  \end{align*}
for $1/2<\lambda \le 1$. Hence the result holds true for $\tau=1$.

Next, assume that the statement of the theorem is true for some $\tau\ge 1$. Then it is enough to show that the statement is also true for the ($\tau+1$)-th component. In the following, we classify each subset $u$ according to whether $u$ includes the component $\{\tau+1\}$ or not. Then we have
  \begin{align*}
    & B((\bsq^{*}_\tau,q_{\tau+1}),\bsgamma) \\
    = & \sum_{\emptyset \ne v\subseteq I_{\tau+1}}\tilde{\gamma}_v \sum_{\substack{\bsk_v \in \nat^{|v|}\\ (\bsk_v,\bszero)\in D^*_{(\bsq^{*}_\tau,q_{\tau+1}),p}}}\psi(\bsk_v,\bszero) \\
    = & \sum_{\emptyset \ne v\subseteq I_{\tau}}\tilde{\gamma}_v \sum_{\substack{\bsk_v \in \nat^{|v|}\\ (\bsk_v,\bszero)\in D^*_{\bsq^{*}_\tau,p}}}\psi(\bsk_v,\bszero) + \sum_{v\subseteq I_{\tau}}\tilde{\gamma}_{v\cup \{\tau+1\}} \sum_{\substack{(\bsk_v,k_{\tau+1}) \in \nat^{|v|+1}\\ (\bsk_v,k_{\tau+1},\bszero)\in D^*_{(\bsq^{*}_\tau,q_{\tau+1}),p}}}\psi(\bsk_v,k_{\tau+1},\bszero) \\
    = & B(\bsq^{*}_\tau,\bsgamma)+\theta(q_{\tau+1}) ,
  \end{align*}
where we have defined
  \begin{align*}
    \theta(q_{\tau+1}) := \sum_{v\subseteq I_{\tau}}\tilde{\gamma}_{v\cup \{\tau+1\}} \sum_{\substack{(\bsk_v,k_{\tau+1}) \in \nat^{|v|+1}\\ (\bsk_v,k_{\tau+1},\bszero)\in D^*_{(\bsq^{*}_\tau,q_{\tau+1}),p}}}\psi(\bsk_v,k_{\tau+1},\bszero) .
  \end{align*}

In order to obtain an upper bound on $\theta(q_{\tau+1}^{*})$, we employ the averaging argument. Since we choose $q_{\tau+1}^*$ which minimizes $\theta(q_{\tau+1})$ in Algorithm \ref{cbc_algor}, $\theta^\lambda(q_{\tau+1}^{*})$ has to be less than or equal to the average of $\theta^\lambda(q_{\tau+1})$ over $q_{\tau+1}\in R_{m}$ for any $1/2< \lambda \le 1$. We obtain
  \begin{align*}
    \theta^{\lambda}(q^{*}_{\tau+1}) & \le \frac{1}{2^m-1}\sum_{q_{\tau+1}\in R_{m}}\theta^{\lambda}(q_{\tau+1}) \\
    & \le \frac{1}{2^m-1}\sum_{q_{\tau+1}\in R_{m}}\sum_{v\subseteq I_{\tau}}\tilde{\gamma}^\lambda_{v\cup \{\tau+1\}} \sum_{\substack{(\bsk_v,k_{\tau+1}) \in \nat^{|v|+1}\\ (\bsk_v,k_{\tau+1},\bszero)\in D^*_{(\bsq^{*}_\tau,q_{\tau+1}),p}}}\psi^\lambda(\bsk_v,k_{\tau+1},\bszero) \\
    & = \sum_{v\subseteq I_\tau}\frac{\tilde{\gamma}^\lambda_{v\cup \{\tau+1\}}}{2^m-1}\sum_{q_{\tau+1}\in R_{m}} \sum_{\substack{(\bsk_v,k_{\tau+1}) \in \nat^{|v|+1}\\ (\bsk_v,k_{\tau+1},\bszero)\in D^*_{(\bsq^{*}_\tau,q_{\tau+1}),p}}}\psi^\lambda(\bsk_v,k_{\tau+1},\bszero) ,
  \end{align*}
where we have used (\ref{eq:jensen}) in the second inequality. For a fixed $v\subseteq I_\tau$, we consider the condition $(\bsk_v,k_{\tau+1},\bszero)\in D^*_{(\bsq^{*}_\tau,q_{\tau+1}),p}$, that is,
  \begin{align*}
    \rtr_m(\bsk_v)\cdot \bsq_v + \rtr_m(k_{\tau+1})\cdot q_{\tau+1} \equiv 0 \pmod p .
  \end{align*}
If $k_{\tau+1}$ is a multiple of $2^m$, we always have $\rtr_m(k_{\tau+1})=0$ and the above equation becomes independent of $q_{\tau+1}$. Otherwise if $k_{\tau+1}$ is not a multiple of $2^m$, we have $\rtr_m(k_{\tau+1})\ne 0$ and the term $\rtr_m(k_{\tau+1})\cdot q_{\tau+1}$ cannot be a multiple of $p$. Thus we have
  \begin{align*}
    & \frac{1}{2^m-1}\sum_{q_{\tau+1}\in R_{m}} \sum_{\substack{(\bsk_v,k_{\tau+1}) \in \nat^{|v|+1}\\ (\bsk_v,k_{\tau+1},\bszero)\in D^*_{(\bsq^{*}_\tau,q_{\tau+1}),p}}}\psi^\lambda(\bsk_v,k_{\tau+1},\bszero) \\
    = & \sum_{\substack{k_{\tau+1}=1\\ 2^m\mid k_{\tau+1}}}^{\infty}\psi^{\lambda}(k_{\tau+1})\sum_{\substack{\bsk_v\in \nat^{|v|}\\ \rtr_m(\bsk_v)\cdot \bsq_v\equiv 0 \pmod p}}\psi^{\lambda}(\bsk_v) \\
    & + \frac{1}{2^m-1}\sum_{\substack{k_{\tau+1}=1\\ 2^m\nmid k_{\tau+1}}}^{\infty}\psi^{\lambda}(k_{\tau+1})\sum_{\substack{\bsk_v\in \nat^{|v|}\\ \rtr_m(\bsk_v)\cdot \bsq_v\not\equiv 0 \pmod p}}\psi^{\lambda}(\bsk_v) \\
    \le & \frac{1}{2^{2\lambda m}}\sum_{k_{\tau+1}=1}^{\infty}\psi^{\lambda}(k_{\tau+1})\sum_{\substack{\bsk_v\in \nat^{|v|}\\ \rtr_m(\bsk_v)\cdot \bsq_v\equiv 0 \pmod p}}\psi^{\lambda}(\bsk_v) \\
    & + \frac{1}{2^m-1}\sum_{k_{\tau+1}=1}^{\infty}\psi^{\lambda}(k_{\tau+1})\sum_{\substack{\bsk_v\in \nat^{|v|}\\ \rtr_m(\bsk_v)\cdot \bsq_v\not\equiv 0 \pmod p}}\psi^{\lambda}(\bsk_v) \\
    \le & \frac{1}{2^m-1}\sum_{k_{\tau+1}=1}^{\infty}\psi^{\lambda}(k_{\tau+1})\sum_{\bsk_v\in \nat^{|v|}}\psi^{\lambda}(\bsk_v) \\
    = & \frac{1}{2^m-1}\left[ \sum_{k=1}^{\infty}\psi^{\lambda}(k)\right]^{|v|+1}\; = \; \frac{1}{(2^m-1)(2^{2\lambda}-2)^{|v|+1}} .
  \end{align*}
Using this result, we obtain an upper bound on $\theta(q_{\tau+1}^{*})$ as
  \begin{align*}
    \theta^{\lambda}(q^{*}_{\tau+1}) & \le \frac{1}{2^m-1}\sum_{v\subseteq I_\tau}\tilde{\gamma}^\lambda_{v\cup \{\tau+1\}}\frac{1}{(2^{2\lambda}-2)^{|v|+1}} .
  \end{align*}
Finally by applying (\ref{eq:jensen}) we have 
  \begin{align*}
    & B^{\lambda}((\bsq^{*}_{\tau},q_{\tau+1}^{*}),\bsgamma) \\
    = & \left( B(\bsq^{*}_{\tau},\bsgamma)+\theta(q_{\tau+1}^{*}) \right)^{\lambda} \\
    \le & B^{\lambda}(\bsq^{*}_{\tau},\bsgamma)+\theta^{\lambda}(q_{\tau+1}^{*}) \\
    \le & \frac{1}{2^m-1}\sum_{\emptyset \ne v\subseteq I_\tau}\tilde{\gamma}_v^{\lambda} \frac{1}{(2^{2\lambda}-2)^{|v|}} +\frac{1}{2^m-1}\sum_{v\subseteq I_\tau}\tilde{\gamma}^\lambda_{v\cup \{\tau+1\}}\frac{1}{(2^{2\lambda}-2)^{|v|+1}}  \\
    = & \frac{1}{2^m-1}\sum_{\emptyset \ne v\subseteq I_{\tau+1}}\tilde{\gamma}_v^{\lambda} \frac{1}{(2^{2\lambda}-2)^{|v|}} ,
  \end{align*}
for $1/2< \lambda \le 1$, from which (\ref{eq:theorem1}) holds true for the $(\tau+1)$-th component. Hence the result follows.
\end{proof}

\begin{remark}\label{remark:cbc_one_dimension}
For $\tau=1$, we have as in the proof of Theorem \ref{theorem1}
  \begin{align*}
     B(q^{*}_1,\bsgamma) = \tilde{\gamma}_{\{1\}}\frac{1}{2^{2m+1}}.
  \end{align*}
Since the lower bound on the $\cL_2$ discrepancy is given as in (\ref{eq:lower_bound}), this achieves the best possible rate of convergence. As a one-dimensional polynomial lattice point set consists of the equidistributed points $x_n=n/b^m$, $n=0,\ldots, 2^m-1$, other QMC point sets such as Sobol' and Niederreiter sequences constructed over $\FF_2$ also give the same result.
\end{remark}

\begin{remark}\label{remark:cbc_bound}
For $\tau=s$, we further have
  \begin{align*}
     B(q^{*}_s,\bsgamma) & \le \frac{1}{(2^m-1)^{1/\lambda}}\left[ \sum_{\emptyset \ne v\subseteq I_s}\tilde{\gamma}_v^{\lambda} \frac{1}{(2^{2\lambda}-2)^{|v|}} \right]^{1/\lambda} \\
     & \le \frac{1}{(2^m-1)^{1/\lambda}}\left[ \sum_{\emptyset \ne v\subseteq I_s}\left(\sum_{v\subseteq u\subseteq I_s}\left( \frac{\gamma_u}{3^{|u|}}\right)^{\lambda} \right) \frac{1}{(2^{2\lambda}-2)^{|v|}} \right]^{1/\lambda} \\
     & = \frac{1}{(2^m-1)^{1/\lambda}}\left[ \sum_{\emptyset \ne v\subseteq I_s}\left( \frac{\gamma_v}{3^{|v|}}\right)^{\lambda} \left( -1+\left(\frac{2^{2\lambda}-1}{2^{2\lambda}-2}\right)^{|v|}\right) \right]^{1/\lambda} ,
  \end{align*}
where we have used (\ref{eq:jensen}) in the second inequality and swapped the order of sums in the last equality. Thus, we have obtained an upper bound on $B(q^{*}_s,\bsgamma)$ using not $\tilde{\gamma}_v$ but $\gamma_v$ for $v\subseteq I_s$.
\end{remark}

In the following we discuss strong tractability of our construction algorithm. Let us consider the inverse of the mean square weighted $\cL_2$ discrepancy which is defined as follows
  \begin{align*}
    N(s,\epsilon)=\min\{N\in \nat: \EE[\cL_{2,N,\bsgamma}^2(\tilde{P}_{N,s})]\le \epsilon \EE[\cL_{2,0,\bsgamma}^2(\tilde{P}_{0,s})]\} .
  \end{align*}
We say that the mean square weighted $\cL_2$ discrepancy is strongly tractable if there exist non-negative constants $C$ and $\beta$ such that
  \begin{align*}
    N(s,\epsilon)\le C\epsilon^{-\beta} ,
  \end{align*}
where $C$ depends neither on $\epsilon$ or $s$ and we call $\beta$ the exponent of tractability.

In the next corollary, we write $\gamma_{s,u}$ instead of $\gamma_u$ to emphasize the dependence on $s$ of our construction algorithm. We denote by $\bsgamma$ a sequence of weights $(\gamma_{s,u})_{u\subseteq I_s}$ for $s\in \nat$.

\begin{corollary}\label{cor:tractability}
Assume that the weights $\bsgamma$ satisfy the condition
  \begin{align*}
    B_{\bsgamma,\lambda} := \sup_{s\in \nat}\frac{\left[\sum_{\emptyset \ne u\subseteq I_s}\left( \frac{\gamma_{s,u}}{3^{|u|}}\right)^{\lambda}\left(-1+\left( \frac{2^{2\lambda}-1}{2^{2\lambda}-2}\right)^{|u|} \right)\right]^{1/\lambda}}{\sum_{\emptyset \ne u\subseteq I_s}\frac{\gamma_{s,u}}{3^{|u|}}} < \infty ,
  \end{align*}
for some $\lambda$ such that $1/2<\lambda \le 1$. Then the mean square weighted $\cL_2$ discrepancy is strongly tractable with the exponent of tractability at most $\lambda$.
\end{corollary}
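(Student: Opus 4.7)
The plan is to combine the exact formula for the initial error $\EE[\cL_{2,0,\bsgamma}^2(\tilde{P}_{0,s})]$ with the upper bound on $B(\bsq^*_s,\bsgamma)$ recorded in Remark \ref{remark:cbc_bound}, and then invert the resulting inequality to solve for $N$ in terms of $\epsilon$.

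First, I would compute the initial error. With $N=0$ the point set is empty, so $A_0([\bszero,\bst),\emptyset)=0$ and the local discrepancy is simply $\Delta(\bst_u,\bsone)=-\prod_{j\in u}t_j$. Substituting into the definition of $\cL_{2,N,\bsgamma}$ and carrying out the one-dimensional integrals $\int_0^1 t_j^2\,dt_j=1/3$ gives
\begin{align*}
    \EE\bigl[\cL_{2,0,\bsgamma}^2(\tilde{P}_{0,s})\bigr] \;=\; \sum_{\emptyset\ne u\subseteq I_s}\frac{\gamma_{s,u}}{3^{|u|}}.
\end{align*}
(Alternatively this is immediate from Proposition \ref{prop:L2_disc} with empty sums over $n,n'$.) Next, by Corollary \ref{cor:L2_disc} we have $\EE[\cL_{2,2^m,\bsgamma}^2(\tilde P_{2^m,s}(\bsq^*_s,p))]=B(\bsq^*_s,\bsgamma)$, and applying Remark \ref{remark:cbc_bound} at $\tau=s$ yields
\begin{align*}
    \EE\bigl[\cL_{2,2^m,\bsgamma}^2(\tilde P_{2^m,s}(\bsq^*_s,p))\bigr] \;\le\; \frac{1}{(2^m-1)^{1/\lambda}}\left[\sum_{\emptyset\ne v\subseteq I_s}\left(\frac{\gamma_{s,v}}{3^{|v|}}\right)^{\lambda}\left(-1+\left(\frac{2^{2\lambda}-1}{2^{2\lambda}-2}\right)^{|v|}\right)\right]^{1/\lambda}.
\end{align*}

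Dividing the second display by the first and invoking the defining hypothesis of $B_{\bsgamma,\lambda}$, I obtain the key ratio bound
\begin{align*}
    \frac{\EE[\cL_{2,2^m,\bsgamma}^2(\tilde P_{2^m,s}(\bsq^*_s,p))]}{\EE[\cL_{2,0,\bsgamma}^2(\tilde P_{0,s})]} \;\le\; \frac{B_{\bsgamma,\lambda}}{(2^m-1)^{1/\lambda}},
\end{align*}
which is finite and, crucially, bounded by a quantity independent of $s$. To force the right-hand side to be at most $\epsilon$, it suffices to take $2^m-1\ge B_{\bsgamma,\lambda}^{\lambda}\epsilon^{-\lambda}$. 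Choosing $m$ to be the smallest integer for which this holds and setting $N=2^m$, I get $N\le 2(B_{\bsgamma,\lambda}^{\lambda}\epsilon^{-\lambda}+1)$, so $N(s,\epsilon)\le C\epsilon^{-\lambda}$ with a constant $C$ depending only on $B_{\bsgamma,\lambda}$ (absorbing an $O(1)$ factor to handle moderate $\epsilon$). This establishes strong tractability with exponent of tractability at most $\lambda$.

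There is no real obstacle here: all the heavy lifting was carried out in Theorem \ref{theorem1} and its refinement in Remark \ref{remark:cbc_bound}, and the hypothesis of the corollary is precisely the uniform-in-$s$ bound needed to pass from a convergence rate to a tractability statement. The only mildly delicate point is matching the numerator of $B_{\bsgamma,\lambda}$ exactly to the Remark \ref{remark:cbc_bound} bound and its denominator exactly to the closed-form expression for $\EE[\cL_{2,0,\bsgamma}^2(\tilde{P}_{0,s})]$, which is why the weights in the supremum appear as $\gamma_{s,u}/3^{|u|}$ rather than $\gamma_{s,u}$ themselves.
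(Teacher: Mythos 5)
Your proposal is correct and follows essentially the same route as the paper: compute the initial error $\sum_{\emptyset\ne u\subseteq I_s}\gamma_{s,u}/3^{|u|}$, apply the bound from Remark \ref{remark:cbc_bound}, invoke the hypothesis on $B_{\bsgamma,\lambda}$ to get a ratio bound independent of $s$, and solve $2^m-1\ge B_{\bsgamma,\lambda}^{\lambda}\epsilon^{-\lambda}$ for $N$. Your explicit handling of the rounding of $N$ to a power of two is slightly more careful than the paper's terse conclusion, but the argument is the same.
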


\begin{proof}
For the empty point set $P_{0,s}$, we have
  \begin{align*}
    \EE[\cL_{2,0,\bsgamma}^2(\tilde{P}_{0,s})] & = \sum_{\emptyset \ne u\subseteq I_s}\gamma_{s,u} \prod_{j\in u}\int_{0}^{1}t_j^2 \rd t_j \\
    & = \sum_{\emptyset \ne u\subseteq I_s}\frac{\gamma_{s,u}}{3^{|u|}} .
  \end{align*}
For a polynomial lattice point set $P_{2^m,s}$ constructed by Algorithm \ref{cbc_algor}, we have from Remark \ref{remark:cbc_bound}
  \begin{align*}
    \EE[\cL_{2,2^m,\bsgamma}^2(P_{2^m,s})] & \le \frac{1}{(2^m-1)^{1/\lambda}}\left[ \sum_{\emptyset \ne u\subseteq I_s}\left( \frac{\gamma_{s,u}}{3^{|u|}}\right)^{\lambda}\left(-1+\left( \frac{2^{2\lambda}-1}{2^{2\lambda}-2}\right)^{|u|} \right)\right]^{1/\lambda} \\
    & \le \frac{1}{(2^m-1)^{1/\lambda}}B_{\bsgamma,\lambda}\sum_{\emptyset \ne u\subseteq I_s}\frac{\gamma_{s,u}}{3^{|u|}}  \\
    & = \frac{1}{(2^m-1)^{1/\lambda}}B_{\bsgamma,\lambda}\EE[\cL_{2,0,\bsgamma}^2(P_{0,s})] .
  \end{align*}
The last term is smaller than or equal to $\epsilon \EE[\cL_{2,0,\bsgamma}^2(P_{0,s})]$ if $N=2^m\ge 1+B^{\lambda}_{\bsgamma,\lambda}\epsilon^{-\lambda}$. Thus the result follows.
\end{proof}

%%%%%%%%%%%%%%%%%%%%%%%%%%%%%%%%%%%%%%%%%%%%%%%%%%%%%%%%%%%
\subsection{Product weights}
In case of product weights, we have for (\ref{eq:cbc_criterion_general})
  \begin{align*}
    & B((\bsq^{*}_{\tau-1},q_\tau),\bsgamma) \\
    = & \sum_{\emptyset \ne v\subseteq I_\tau}\prod_{j\in v}\frac{\gamma_j}{3}\prod_{j'\in I_s\setminus v}\left(1+\frac{\gamma_{j'}}{3}\right) \sum_{\substack{\bsk_v \in \nat^{|v|}\\ (\bsk_v,\bszero)\in D^*_{(\bsq^{*}_{\tau-1},q_\tau),p}}}\psi(\bsk_v,\bszero) \\
    = & \prod_{j''=\tau+1}^{s}\left(1+\frac{\gamma_{j''}}{3}\right)\sum_{\emptyset \ne v\subseteq I_\tau}\prod_{j\in v}\frac{\gamma_j}{3}\prod_{j'\in I_\tau\setminus v}\left(1+\frac{\gamma_{j'}}{3}\right) \sum_{\substack{\bsk_v \in \nat^{|v|}\\ (\bsk_v,\bszero)\in D^*_{(\bsq^{*}_{\tau-1},q_\tau),p}}}\psi(\bsk_v,\bszero) \\
    = & \prod_{j''=\tau+1}^{s}\left(1+\frac{\gamma_{j''}}{3}\right)\left[-\prod_{j=1}^{\tau}\left(1+\frac{\gamma_{j}}{3}\right)+\frac{1}{2^m}\sum_{n=0}^{2^m-1}\prod_{j=1}^{\tau}\left[1+\gamma_{j}\phi (x_{n,j})\right]\right] .
  \end{align*}
Omitting the term $\prod_{j''=\tau+1}^{s}\left(1+\frac{\gamma_{j''}}{3}\right)$ from the criterion, computing $B((\bsq^{*}_{\tau-1},q_\tau),\bsgamma)$ does not require $\gamma_{\tau+1},\ldots,\gamma_s$. Therefore we can make the construction algorithm extensible in $s$, while it is still possible to prove that the constructed polynomial lattice rules achieve almost the best possible rate of convergence as shown below in Theorem \ref{theorem2}. Thus the CBC construction for product weights proceeds as follows.

\begin{algorithm}\label{cbc_algor_product}(CBC construction for product weights)
For $m,s\in \nat$ and product weights $\gamma_u=\prod_{j\in u}\gamma_j$ with any non-negative numbers $\gamma_1,\ldots,\gamma_s$, we proceed as follows.
	\begin{enumerate}
		\item Choose an irreducible polynomial $p\in \FF_2[x]$ with $\deg(p)=m$.
		\item Set $q_1^*=1$.
		\item For $\tau=2,\ldots, s$, find $q^*_{\tau}$ by minimizing $B((\bsq^{*}_{\tau-1},q_\tau),\bsgamma)$ as a function of $q_\tau\in R_{m}$
		where 
  \begin{align*}
    B((\bsq^{*}_{\tau-1},q_\tau),\bsgamma) = & \sum_{\emptyset \ne v\subseteq I_\tau}\tilde{\gamma}_{\tau,v} \sum_{\substack{\bsk_v \in \nat^{|v|}\\ (\bsk_v,\bszero)\in D^*_{(\bsq^{*}_{\tau-1},q_\tau),p}}}\psi(\bsk_v,\bszero) \\
    = & -\prod_{j=1}^{\tau}\left(1+\frac{\gamma_{j}}{3}\right)+\frac{1}{2^m}\sum_{n=0}^{2^m-1}\prod_{j=1}^{\tau}\left[1+\gamma_{j}\phi (x_{n,j})\right] ,
  \end{align*}
        in which we define
  \begin{align*}
    \tilde{\gamma}_{\tau,v}:=\prod_{j\in v}\frac{\gamma_j}{3}\prod_{j'\in I_\tau\setminus v}\left(1+\frac{\gamma_{j'}}{3}\right).
  \end{align*}
	\end{enumerate}
\end{algorithm}

\begin{theorem}\label{theorem2}
Let $p\in \FF_2[x]$ be irreducible polynomial with $\deg(p)=m$. Suppose that $\bsq^{*}_s\in R_m^s$ is constructed according to Algorithm \ref{cbc_algor_product}. Then for any $\tau=1,\ldots, s$ we have
  \begin{align*}
    B(\bsq^{*}_\tau,\bsgamma) \le \frac{1}{(2^m-1)^{1/\lambda}}\left[ \sum_{\emptyset \ne v\subseteq I_\tau}\tilde{\gamma}_{\tau,v}^{\lambda} \frac{1}{(2^{2\lambda}-2)^{|v|}} \right]^{1/\lambda} ,
  \end{align*}
for $1/2<\lambda \le 1$, where $\tilde{\gamma}_{\tau,v}$ is defined as in Algorithm \ref{cbc_algor_product}.
\end{theorem}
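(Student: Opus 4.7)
The plan is to prove Theorem \ref{theorem2} by induction on $\tau$, following the scheme of Theorem \ref{theorem1} but carefully tracking how the weights $\tilde{\gamma}_{\tau,v}$ depend on $\tau$. The base case $\tau=1$ is identical to that of Theorem \ref{theorem1}: since $q_1^*=1$, a direct computation yields $B(q_1^*,\bsgamma)=\tilde{\gamma}_{1,\{1\}}/2^{2m+1}$, which is no larger than the right-hand side for any $\lambda\in(1/2,1]$.

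For the inductive step from $\tau$ to $\tau+1$, I first split the sum defining $B((\bsq^*_\tau,q_{\tau+1}),\bsgamma)$ according to whether $\tau+1\in v$, writing it as $S_1+\theta(q_{\tau+1})$, where $S_1$ collects the terms with $v\subseteq I_\tau$ and $\theta(q_{\tau+1})$ collects the terms with $v$ containing $\tau+1$. The identity specific to product weights,
\[ \tilde{\gamma}_{\tau+1,v}=(1+\gamma_{\tau+1}/3)\,\tilde{\gamma}_{\tau,v}\qquad\text{for }v\subseteq I_\tau, \]
immediately gives $S_1=(1+\gamma_{\tau+1}/3)B(\bsq^*_\tau,\bsgamma)$. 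Crucially, $S_1$ does not depend on $q_{\tau+1}$, so the CBC choice $q^*_{\tau+1}$ minimizes $\theta$ alone.

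Next I would bound $\theta^\lambda(q^*_{\tau+1})$ by the averaging argument. Since $q^*_{\tau+1}$ minimizes $\theta$ (hence $\theta^\lambda$) over $R_m$, it is bounded by $\frac{1}{2^m-1}\sum_{q_{\tau+1}\in R_m}\theta^\lambda(q_{\tau+1})$. Using (\ref{eq:jensen}) to bring $\lambda$ inside the sums over $v$ and $(\bsk_v,k_{\tau+1})$, then swapping the order of summation and repeating the counting argument from Theorem \ref{theorem1} (splitting on whether $k_{\tau+1}$ is a multiple of $2^m$ and using that $\rtr_m(k_{\tau+1})\cdot q_{\tau+1}$ is nonzero modulo the irreducible $p$ in the second case), I would arrive at
\[ \theta^\lambda(q^*_{\tau+1})\le\frac{1}{2^m-1}\sum_{v\subseteq I_\tau}\tilde{\gamma}^\lambda_{\tau+1,v\cup\{\tau+1\}}\,\frac{1}{(2^{2\lambda}-2)^{|v|+1}}. \]

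Finally I combine the two contributions. Applying (\ref{eq:jensen}) once more,
\[ B^\lambda((\bsq^*_\tau,q^*_{\tau+1}),\bsgamma)\le (1+\gamma_{\tau+1}/3)^\lambda B^\lambda(\bsq^*_\tau,\bsgamma)+\theta^\lambda(q^*_{\tau+1}). \]
Invoking the identity $(1+\gamma_{\tau+1}/3)^\lambda\tilde{\gamma}^\lambda_{\tau,v}=\tilde{\gamma}^\lambda_{\tau+1,v}$ converts the inductive hypothesis into a sum over $\emptyset\ne v\subseteq I_\tau$ with weights $\tilde{\gamma}^\lambda_{\tau+1,v}$; relabelling $v\cup\{\tau+1\}\mapsto v$ in the $\theta$ bound then unifies the two contributions into a single sum over all $\emptyset\ne v\subseteq I_{\tau+1}$, which is exactly the claimed bound. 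I expect the main obstacle to be precisely this bookkeeping of the $\tau$-dependence of $\tilde{\gamma}_{\tau,v}$, which is absent in Theorem \ref{theorem1}; the rest of the argument is a direct adaptation of the earlier proof.
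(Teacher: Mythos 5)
Your proposal is correct and follows essentially the same route as the paper's proof: induction on $\tau$, splitting $B((\bsq^*_\tau,q_{\tau+1}),\bsgamma)$ according to whether $\tau+1\in v$, using the product-weight identity $\tilde{\gamma}_{\tau+1,v}=(1+\gamma_{\tau+1}/3)\tilde{\gamma}_{\tau,v}$ to absorb the $v\subseteq I_\tau$ terms into $(1+\gamma_{\tau+1}/3)B(\bsq^*_\tau,\bsgamma)$, bounding $\theta^\lambda(q^*_{\tau+1})$ by the averaging argument of Theorem \ref{theorem1}, and recombining via (\ref{eq:jensen}). Your explicit observation that the first summand is independent of $q_{\tau+1}$ (so the CBC minimizer of $B$ also minimizes $\theta$) is a point the paper leaves implicit, but otherwise the two arguments coincide.
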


\begin{proof}
We prove the theorem by induction on $\tau$ in a quite similar way as the proof of Theorem \ref{theorem1}. For $\tau=1$, we have the result by replacing $\tilde{\gamma}_{\{1\}}$ with $\gamma_{\{1\}}/3$. For $\tau\ge 1$, assume that the statement of the theorem is true. Then we have 
  \begin{align*}
    & B((\bsq^{*}_\tau,q_{\tau+1}),\bsgamma) \\
    = & \sum_{\emptyset \ne v\subseteq I_{\tau+1}}\tilde{\gamma}_{\tau+1,v} \sum_{\substack{\bsk_v \in \nat^{|v|}\\ (\bsk_v,\bszero)\in D^*_{(\bsq^{*}_{\tau},q_{\tau+1}),p}}}\psi(\bsk_v,\bszero) \\
    = & \left(1+\frac{\gamma_{\tau+1}}{3}\right)\sum_{\emptyset \ne v\subseteq I_\tau}\tilde{\gamma}_{\tau,v} \sum_{\substack{\bsk_v \in \nat^{|v|}\\ (\bsk_v,\bszero)\in D^*_{\bsq^{*}_\tau,p}}}\psi(\bsk_v,\bszero) \\
    & + \sum_{v\subseteq I_\tau}\tilde{\gamma}_{\tau+1,v\cup \{\tau+1\}} \sum_{\substack{(\bsk_v,k_{\tau+1}) \in \nat^{|v|+1}\\ (\bsk_v,k_{\tau+1},\bszero)\in D^*_{(\bsq^{*}_{\tau},q_{\tau+1}),p}}}\psi(\bsk_v,k_{\tau+1},\bszero) \\
    = & \left(1+\frac{\gamma_{\tau+1}}{3}\right)B(\bsq^{*}_\tau,\bsgamma)+\theta(q_{\tau+1}) ,
  \end{align*}
where we have defined 
  \begin{align*}
    \theta(q_{\tau+1}):=\sum_{v\subseteq I_\tau}\tilde{\gamma}_{\tau+1,v\cup \{\tau+1\}} \sum_{\substack{(\bsk_v,k_{\tau+1}) \in \nat^{|v|+1}\\ (\bsk_v,k_{\tau+1},\bszero)\in D^*_{(\bsq^{*}_{\tau},q_{\tau+1}),p}}}\psi(\bsk_v,k_{\tau+1},\bszero) .
  \end{align*}
Following the same argument with the proof of Theorem \ref{theorem1}, we can obtain
  \begin{align*}
    \theta^\lambda(q^*_{\tau+1}) & \le \frac{1}{2^m-1}\sum_{v\subseteq I_\tau}\tilde{\gamma}_{\tau+1,v\cup \{\tau+1\}}^\lambda \frac{1}{(2^{2\lambda}-2)^{|v|+1}} .
  \end{align*}
Then using (\ref{eq:jensen}), we have
  \begin{align*}
    & B^\lambda(\bsq^{*}_{\tau+1},\bsgamma) \\
    = & \left(\left(1+\frac{\gamma_{\tau+1}}{3}\right)B(\bsq^{*}_\tau,\bsgamma)+\theta(q^*_{\tau+1})\right)^\lambda \\
    \le & \left(1+\frac{\gamma_{\tau+1}}{3}\right)^{\lambda}B^\lambda(\bsq^{*}_\tau,\bsgamma)+\theta^\lambda(q^*_{\tau+1}) \\
    = & \left(1+\frac{\gamma_{\tau+1}}{3}\right)^{\lambda}\frac{1}{2^m-1}\sum_{\emptyset \ne v\subseteq I_\tau}\tilde{\gamma}_{\tau,v}^\lambda \frac{1}{(2^{2\lambda}-2)^{|v|}}+\frac{1}{2^m-1}\sum_{v\subseteq I_\tau}\tilde{\gamma}_{\tau+1,v\cup \{\tau+1\}}^\lambda \frac{1}{(2^{2\lambda}-2)^{|v|+1}} \\
    = & \frac{1}{2^m-1}\sum_{\emptyset \ne v\subseteq I_\tau}\tilde{\gamma}_{\tau+1,v}^\lambda \frac{1}{(2^{2\lambda}-2)^{|v|}}+\frac{1}{2^m-1}\sum_{v\subseteq I_\tau}\tilde{\gamma}_{\tau+1,v\cup \{\tau+1\}}^\lambda \frac{1}{(2^{2\lambda}-2)^{|v|+1}} \\
    = & \frac{1}{2^m-1}\sum_{\emptyset \ne v\subseteq I_{\tau+1}}\tilde{\gamma}_{\tau+1,v}^\lambda \frac{1}{(2^{2\lambda}-2)^{|v|}} ,
  \end{align*}
for $1/2<\lambda \le 1$. Hence the result follows.
\end{proof}

\begin{remark}
For any $\tau$ such that $1\le \tau\le s$, we have
  \begin{align*}
    B(\bsq^{*}_\tau,\bsgamma) & \le \frac{1}{(2^m-1)^{1/\lambda}}\left[ \sum_{\emptyset \ne v\subseteq I_\tau}\prod_{j\in v}\left(\frac{\gamma_j}{3}\right)^\lambda \prod_{j'\in I_\tau\setminus v}\left(1+\frac{\gamma_{j'}}{3}\right)^\lambda \frac{1}{(2^{2\lambda}-2)^{|v|}} \right]^{1/\lambda} \\
    & = \frac{1}{(2^m-1)^{1/\lambda}}\left[ \prod_{j=1}^{\tau}\left(\left( \frac{1}{2^{2\lambda}-2}\cdot \frac{\gamma_j}{3}\right)^\lambda+\left(1+\frac{\gamma_j}{3}\right)^\lambda\right)-\prod_{j=1}^{\tau}\left(1+\frac{\gamma_j}{3}\right)^\lambda \right]^{1/\lambda} \\
    & \le \frac{1}{(2^m-1)^{1/\lambda}}\left[ \prod_{j=1}^{\tau}\left(1+\frac{2^{2\lambda}-1}{2^{2\lambda}-2}\left( \frac{\gamma_j}{3}\right)^\lambda\right)-\prod_{j=1}^{\tau}\left(1+\frac{\gamma_j}{3}\right)^\lambda \right]^{1/\lambda}, 
  \end{align*}
where we have used (\ref{eq:jensen}) in the last inequality. This expression gives an upper bound on $B(\bsq^{*}_\tau,\bsgamma)$ using not $\tilde{\gamma}_{\tau,v}$ but $\gamma_j$ for $1\le j\le s$. Then as in Corollary \ref{cor:tractability}, assume that the sequence of weights $\gamma_1,\gamma_2,\ldots,$ satisfies the condition
  \begin{align*}
    B_{\bsgamma,\lambda} := \sup_{s\in \nat}\frac{\left[ \prod_{j=1}^{s}\left(1+\frac{2^{2\lambda}-1}{2^{2\lambda}-2}\left( \frac{\gamma_j}{3}\right)^\lambda\right)-\prod_{j=1}^{s}\left(1+\frac{\gamma_j}{3}\right)^\lambda \right]^{1/\lambda}}{\prod_{j=1}^{s}\left(1+\frac{\gamma_j}{3}\right)-1},
  \end{align*}
for some $\lambda$ such that $1/2<\lambda \le 1$. Then the mean square weighted $\cL_2$ discrepancy is strongly tractable with the exponent of tractability at most $\lambda$.
\end{remark}

\begin{remark}\label{remark:fast_cbc}
The criterion used in Algorithm \ref{cbc_algor_product} has been simplified as
  \begin{align*}
    B((\bsq^{*}_{\tau-1},q_\tau),\bsgamma) = -\prod_{j=1}^{\tau}\left(1+\frac{\gamma_{j}}{3}\right)+\frac{1}{2^m}\sum_{n=0}^{2^m-1}\prod_{j=1}^{\tau}\left[1+\gamma_{j}\phi (x_{n,j})\right] ,
  \end{align*}
see Algorithm \ref{cbc_algor_product}. For this form, it is possible to reduce the computational cost of the CBC construction by using the fast Fourier transform as shown in \cite{NC06a,NC06b}. 
\end{remark}

%%%%%%%%%%%%%%%%%%%%%%%%%%%%%%%%%%%%%%%%%%%%%%%%%%%%%%%%%%%
%%%%%%%%%%%%%%%%%%%%%%%%%%%%%%%%%%%%%%%%%%%%%%%%%%%%%%%%%%%
%%%%%%%%%%%%%%%%%%%%%%%%%%%%%%%%%%%%%%%%%%%%%%%%%%%%%%%%%%%
\section{Numerical experiments}\label{numer}

Finally, we demonstrate the performance of our constructed polynomial lattice rules. We focus on the case of product weights, that is, $\gamma_u=\prod_{j\in u}\gamma_j$, because of their importance in practice and the availability of the fast CBC construction algorithm using the fast Fourier transform as mentioned in Remark \ref{remark:fast_cbc}. Three choices for $\gamma_j$ are considered here: $\gamma_j=1$ (unweighted), $\gamma_j=0.9^j$ and $\gamma_j=1/j^2$ for $j=1,\ldots,s$.

We compare the performance of our constructed polynomial lattice point sets with that of Sobol' sequences, which is one of the most well-known digital sequences over $\FF_2$ \cite{Sob67}. Since the weights emphasize the relative importance of the discrepancy of lower dimensional projections, we use Sobol' sequences as constructed in \cite{JK08}, which should work as a good competitor.

In Table \ref{tb:1}-\ref{tb:3}, we show the values of the mean square weighted $\cL_2$ discrepancy for Sobol' sequences and our constructed polynomial lattice point sets, denoted by Sobol' and PLR respectively, with $m=4,\ldots, 15$ and $s=1,5,50,100$ and different choices for the weights.

As expected from Remark \ref{remark:cbc_one_dimension}, we obtain exactly the same values for both the rules for $s=1$ and achieve the optimal rate of convergence, $2^{-2m}$, independent of the choice of the weights. In case of $s=5$, although Sobol' sequence provide the slightly better results for large $m$, the values are comparable. For $s=50$ and $s=100$, we obtain almost the same values for both the rules in the unweighted case, while our constructed polynomial lattice point sets outperform Sobol' sequences in other cases.
\begin{table}
\caption{The mean square weighted $\cL_2$ discrepancy for $\gamma_j=1$.}
\label{tb:1}
\begin{tabular}{c||cc|cc|cc|cc}
\hline
$m$ & \multicolumn{2}{|c|}{$s=1$} & \multicolumn{2}{|c|}{$s=5$} & \multicolumn{2}{|c|}{$s=50$} & \multicolumn{2}{|c}{$s=100$} \\ \hline
  & Sobol' & PLR & Sobol' & PLR & Sobol' & PLR & Sobol' & PLR \\ \hline
4	& 6.51E-04	& 6.51E-04	& 4.83E-02	& 3.79E-02	& 3.93E+07	& 3.91E+07	& 2.54E+16	& 2.54E+16 \\
5	& 1.63E-04	& 1.63E-04	& 1.45E-02	& 1.37E-02	& 1.96E+07	& 1.94E+07	& 1.27E+16	& 1.27E+16 \\
6	& 4.07E-05	& 4.07E-05	& 5.04E-03	& 4.29E-03	& 9.70E+06	& 9.64E+06	& 6.35E+15	& 6.35E+15 \\
7	& 1.02E-05	& 1.02E-05	& 1.27E-03	& 1.32E-03	& 4.78E+06	& 4.77E+06	& 3.18E+15	& 3.18E+15 \\
8	& 2.54E-06	& 2.54E-06	& 4.11E-04	& 4.69E-04	& 2.36E+06	& 2.35E+06	& 1.59E+15	& 1.59E+15 \\
9	& 6.36E-07	& 6.36E-07	& 1.21E-04	& 1.38E-04	& 1.17E+06	& 1.16E+06	& 7.94E+14	& 7.94E+14 \\
10	& 1.59E-07	& 1.59E-07	& 4.01E-05	& 4.47E-05	& 5.80E+05	& 5.70E+05	& 3.97E+14	& 3.97E+14 \\
11	& 3.97E-08	& 3.97E-08	& 1.15E-05	& 1.28E-05	& 2.89E+05	& 2.80E+05	& 1.98E+14	& 1.98E+14 \\
12	& 9.93E-09	& 9.93E-09	& 3.45E-06	& 4.41E-06	& 1.44E+05	& 1.37E+05	& 9.92E+13	& 9.91E+13 \\
13	& 2.48E-09	& 2.48E-09	& 1.17E-06	& 1.39E-06	& 7.17E+04	& 6.69E+04	& 4.96E+13	& 4.95E+13 \\
14	& 6.21E-10	& 6.21E-10	& 2.78E-07	& 4.05E-07	& 3.56E+04	& 3.27E+04	& 2.48E+13	& 2.48E+13 \\
15	& 1.55E-10	& 1.55E-10	& 7.98E-08	& 1.31E-07	& 1.76E+04	& 1.59E+04	& 1.24E+13	& 1.24E+13 \\
\hline                                                                                 
\end{tabular}
\end{table}

\begin{table}
\caption{The mean square weighted $\cL_2$ discrepancy for $\gamma_j=0.9^j$.}
\label{tb:2}
\begin{tabular}{c||cc|cc|cc|cc}
\hline
$m$ & \multicolumn{2}{|c|}{$s=1$} & \multicolumn{2}{|c|}{$s=5$} & \multicolumn{2}{|c|}{$s=50$} & \multicolumn{2}{|c}{$s=100$} \\ \hline
  & Sobol' & PLR & Sobol' & PLR & Sobol' & PLR & Sobol' & PLR \\ \hline
4	& 5.86E-04	& 5.86E-04	& 2.13E-02	& 1.72E-02	& 1.43E+00	& 1.22E+00	& 1.48E+00	& 1.26E+00 \\
5	& 1.46E-04	& 1.46E-04	& 6.25E-03	& 5.93E-03	& 6.27E-01	& 5.16E-01	& 6.47E-01	& 5.34E-01 \\
6	& 3.66E-05	& 3.66E-05	& 2.07E-03	& 1.80E-03	& 2.47E-01	& 2.17E-01	& 2.56E-01	& 2.25E-01 \\
7	& 9.16E-06	& 9.16E-06	& 5.25E-04	& 5.41E-04	& 9.81E-02	& 8.85E-02	& 1.02E-01	& 9.19E-02 \\
8	& 2.29E-06	& 2.29E-06	& 1.64E-04	& 1.84E-04	& 3.94E-02	& 3.52E-02	& 4.11E-02	& 3.67E-02 \\
9	& 5.72E-07	& 5.72E-07	& 4.73E-05	& 5.23E-05	& 1.60E-02	& 1.41E-02	& 1.66E-02	& 1.47E-02 \\
10	& 1.43E-07	& 1.43E-07	& 1.52E-05	& 1.70E-05	& 6.73E-03	& 5.62E-03	& 7.02E-03	& 5.87E-03 \\
11	& 3.58E-08	& 3.58E-08	& 4.29E-06	& 5.19E-06	& 2.97E-03	& 2.26E-03	& 3.10E-03	& 2.36E-03 \\
12	& 8.94E-09	& 8.94E-09	& 1.25E-06	& 1.58E-06	& 1.25E-03	& 8.90E-04	& 1.31E-03	& 9.33E-04 \\
13	& 2.24E-09	& 2.24E-09	& 4.01E-07	& 4.85E-07	& 5.61E-04	& 3.57E-04	& 5.86E-04	& 3.75E-04 \\
14	& 5.59E-10	& 5.59E-10	& 9.89E-08	& 1.43E-07	& 2.13E-04	& 1.41E-04	& 2.24E-04	& 1.49E-04 \\
15	& 1.40E-10	& 1.40E-10	& 2.79E-08	& 4.38E-08	& 7.84E-05	& 5.61E-05	& 8.30E-05	& 5.91E-05 \\
\hline                                                                                           
\end{tabular}
\end{table}

\begin{table}
\caption{The mean square weighted $\cL_2$ discrepancy for $\gamma_j=1/j^2$.}
\label{tb:3}
\begin{tabular}{c||cc|cc|cc|cc}
\hline
$m$ & \multicolumn{2}{|c|}{$s=1$} & \multicolumn{2}{|c|}{$s=5$} & \multicolumn{2}{|c|}{$s=50$} & \multicolumn{2}{|c}{$s=100$} \\ \hline
  & Sobol' & PLR & Sobol' & PLR & Sobol' & PLR & Sobol' & PLR \\ \hline
4	& 6.51E-04	& 6.51E-04	& 1.84E-03	& 1.73E-03	& 2.99E-03	& 2.47E-03	& 3.07E-03	& 2.53E-03 \\
5	& 1.63E-04	& 1.63E-04	& 4.81E-04	& 4.76E-04	& 8.63E-04	& 7.31E-04	& 8.95E-04	& 7.50E-04 \\
6	& 4.07E-05	& 4.07E-05	& 1.35E-04	& 1.28E-04	& 2.64E-04	& 2.10E-04	& 2.78E-04	& 2.17E-04 \\
7	& 1.02E-05	& 1.02E-05	& 3.53E-05	& 3.43E-05	& 7.42E-05	& 5.98E-05	& 8.09E-05	& 6.24E-05 \\
8	& 2.54E-06	& 2.54E-06	& 9.21E-06	& 9.43E-06	& 2.23E-05	& 1.75E-05	& 2.48E-05	& 1.84E-05 \\
9	& 6.36E-07	& 6.36E-07	& 2.53E-06	& 2.51E-06	& 6.56E-06	& 4.94E-06	& 7.37E-06	& 5.24E-06 \\
10	& 1.59E-07	& 1.59E-07	& 6.94E-07	& 6.86E-07	& 1.75E-06	& 1.41E-06	& 2.02E-06	& 1.51E-06 \\
11	& 3.97E-08	& 3.97E-08	& 1.82E-07	& 1.90E-07	& 4.87E-07	& 4.12E-07	& 5.53E-07	& 4.43E-07 \\
12	& 9.93E-09	& 9.93E-09	& 4.76E-08	& 5.00E-08	& 1.39E-07	& 1.16E-07	& 1.62E-07	& 1.26E-07 \\
13	& 2.48E-09	& 2.48E-09	& 1.29E-08	& 1.35E-08	& 4.06E-08	& 3.40E-08	& 4.89E-08	& 3.70E-08 \\
14	& 6.21E-10	& 6.21E-10	& 3.35E-09	& 3.80E-09	& 1.29E-08	& 1.01E-08	& 1.53E-08	& 1.10E-08 \\
15	& 1.55E-10	& 1.55E-10	& 8.87E-10	& 1.01E-09	& 3.61E-09	& 2.97E-09	& 4.37E-09	& 3.27E-09 \\
\hline                                                                                           
\end{tabular}
\end{table}

\end{document}